\newtheorem{theorem}{Theorem}
\newtheorem{proposition}[theorem]{Proposition}
\newtheorem{lemma}[theorem]{Lemma}
\newtheorem{corollary}[theorem]{Corollary}
\theoremstyle{definition}
\newtheorem{remark}[theorem]{Remark}
\newtheorem{conjecture}[theorem]{Conjecture}
\newtheorem{example}[theorem]{Example}
\newcommand{\PP}{\mathbb{P}}
\newcommand{\ZZ}{\mathbb{Z}}
\newcommand{\NN}{\mathbb{N}}
\newcommand{\KK}{\mathbb{K}}
\title{\bf Subspaces Fixed by a Nilpotent Matrix}
\author{
  Marvin Anas Hahn, Gabriele Nebe, Mima Stanojkovski, and Bernd Sturmfels}
\date{ }
\begin{document}
\maketitle

\begin{abstract} \noindent
The linear spaces that are fixed by a given nilpotent $n \times n$ matrix
form a subvariety of the Grassmannian. 
We classify these varieties for small $n$.
Mutiah, Weekes and Yacobi conjectured that their
radical ideals are
generated by certain linear forms known as shuffle equations.
We prove this conjecture for $n \leq 7$, and we disprove it for $n=8$.
The question remains open
for nilpotent matrices arising from the affine Grassmannian.
\end{abstract}

\section{Introduction}

For an arbitrary field $K$, the Grassmannian ${\rm Gr}(\ell,n)$ parametrizes
 $\ell$-dimensional  subspaces $L$ of the vector space $K^n$.
Given any  matrix $T \in K^{n \times n}$, we write $LT$ for the image of $L$ under the 
map given by $T$. This right action is compatible with representing $L$ as the
row space of an $\ell \times n$ matrix ${\bf L}$.
The Pl\"ucker embedding of ${\rm Gr}(\ell,n) $ into $ \PP^{\binom{n}{\ell}-1}$
arises by representing $L$ with the vector of maximal minors $p_{i_1 i_2 \cdots i_\ell}$ of ${\bf L}$.
Its homogeneous prime ideal has a natural Gr\"obner basis of quadrics
\cite[Theorem 3.1.7]{AIT}. These are  known as the {\em Pl\"ucker quadrics}.

 In this paper we  assume that $T$ is nilpotent,
i.e.~$T^n = 0$, and we study the subvariety
$$ {\rm Gr}(\ell,n)^T \,\,= \,\, \bigl\{ L \in {\rm Gr}(\ell,n)\,:\, L T \subseteq L  \bigr\}. $$
We are interested in its homogeneous radical ideal in the Pl\"ucker coordinates $p_{i_1 i_2 \cdots i_\ell}$.

\begin{example}[$n=4,\ell=2$] \label{ex:42}
Fix a nonzero scalar $\epsilon $ and consider the nilpotent $4 \times 4$ matrix 
$$ T \,\, =\,\,\begin{small} \begin{pmatrix} 0 & 1  & 0 & 0 \\
0 & 0 & 0 & 0 \\
0 & 0 & 0 & \epsilon \\
0 & 0 & 0 & 0 \end{pmatrix} \end{small}. $$
The  fixed point locus ${\rm Gr}(2,4)^T$ is a singular surface in the
$4$-dimensional Grassmannian ${\rm Gr}(2,4) = V( p_{12} p_{34} - p_{13}p_{23} + p_{14} p_{23})$.
It is the quadratic cone in $\PP^3$ defined by the
prime~ideal 
\begin{equation}
\label{eq:42shuffle} \langle \,p_{13},\,
p_{14} + \epsilon \,p_{23},\,
p_{12} p_{34} - \epsilon p_{23}^2\,
\rangle  \quad = \quad
\langle \,p_{13},\,
p_{14} + \epsilon\, p_{23}\,\rangle  \, + \, \hbox{ideal of ${\rm Gr}(2,4)$}.
\end{equation}
On an affine chart of ${\rm Gr}(2,4)$, each plane $L$ that is fixed by $T$ is the
row span of a matrix
$$ \qquad {\bf L} \,\, = \,\, \begin{pmatrix} 
1 & 0 & x & y \\
0 & 1 & 0 & \epsilon  x 
\end{pmatrix}, \quad {\rm or} \quad
{\bf L} \,\, = \,\,
\begin{pmatrix} 
\epsilon z & w & 1 & 0 \\
0 & z &  0 & 1 \end{pmatrix}\,
\,\,\,\hbox{after setting}\,\,\, x = \frac{1}{\epsilon z} , \,y = - \frac{w}{\epsilon z^2}.
$$
Next consider the special case $\epsilon = 0$. The ideal
(\ref{eq:42shuffle}) is still radical, but it now decomposes:
\begin{equation}
\label{eq:42shuffledeco}
\langle \,p_{13}\,,\,
p_{14} \,,\,
p_{12} p_{34} \,
\rangle  \quad = \quad
\langle \,p_{13}, p_{14}, p_{34}\, \rangle  \,\,\cap\,\,
\langle \,p_{12},p_{13},p_{14} \,\rangle.
\end{equation}
The quadratic cone degenerates into two planes $\PP^2$ in ${\rm Gr}(2,4) \subset \PP^5$.
They are given by
$$
 {\bf L} \,\, = \,\, \begin{pmatrix} 
1 & 0 & x & y \\
0 & 1 & 0 & 0
\end{pmatrix}
\quad {\rm and} \quad
 {\bf L} \,\, = \,\, \begin{pmatrix} 
0 & w & 1 & 0 \\
0 & z & 0 & 1 
\end{pmatrix} .
$$
We conclude that ${\rm Gr}(2,4)^T$ can be singular or reducible.
For all values of $\epsilon \in K$,
its radical ideal is generated by two linear forms plus the Pl\"ucker 
quadric $ p_{12} p_{34} - p_{13}p_{23} + p_{14} p_{23}$.
\hfill $\diamond$ \end{example}

We assume from now on that the nilpotent matrix $T$ is 
in Jordan canonical form. The necessary change of basis in $K^n$
works over an arbitrary field $K$ because all the eigenvalues of $T$ are zero.
The matrix $T$ in Example \ref{ex:42} is in Jordan canonical form
when $\epsilon=0$ or $\epsilon=1$.

Kreiman, Lakshmibai, Magyar, and Weyman \cite{KLMW}
identified a natural set of linear forms in Pl\"ucker coordinates
that vanish on ${\rm Gr}(\ell,n)^T$.
These are called {\em shuffle equations} and they generalize
the two linear forms  seen in (\ref{eq:42shuffle}).
It was conjectured in \cite{KLMW}
 that the shuffle equations cut out certain models of the affine Grassmannian.
 Muthiah, Weekes and Yacobi \cite{MuWeYa} gave
 a reformulation of the shuffle equations, and they proved
 the main conjecture of \cite{KLMW}. We refer to \cite[Section 6]{MuWeYa}
 for that proof and for a conceptual discussion of the shuffle equations.
It was subsequently conjectured in \cite[Section 7]{MuWeYa} 
that the shuffle equations plus the Pl\"ucker quadrics
generate the radical ideal of ${\rm Gr}(\ell,n)^T$.
The present paper settles that conjecture.

Our presentation is organized as follows.
In Section \ref{sec2} we review the shuffle equations 
and we show how to generate them in {\tt Macaulay2} \cite{M2}.
   The duality result  in Theorem \ref{th:duality} 
     allows us to swap $\ell$ and $n-\ell$ in these computations.
In Section \ref{sec3} we present the classification of all
varieties ${\rm Gr}(\ell,n)^T$ for $n \leq 8$. We compute their
dimensions, degrees, irreducible components, and defining equations.
We disprove the conjecture of Muthiah, Weekes and Yacobi
 \cite[Conjecture 7.6]{MuWeYa} for $n=8$, and we show that it holds for $n \leq 7$.
    Section \ref{sec4} is devoted to finite-dimensional models of the
affine Grassmannian. Here $T$ is the nilpotent matrix
given by a partition of rectangular shape. We prove that
 ${\rm Gr}(\ell,n)^T$ is irreducible for such $T$, and we give a
matrix parametrization. We believe that 
Conjecture 7.1 in \cite{MuWeYa}~holds.
This is equivalent to \cite[Conjecture 7.6]{MuWeYa} for 
rectangular shapes. We offer supporting evidence.

\section{Shuffle Equations}
\label{sec2}

Fix a nilpotent $n \times n$ matrix $T = T_\lambda$ in Jordan canonical form.
Here $\lambda = (\lambda_1 \geq \lambda_2 \geq \cdots \geq \lambda_s)$ 
is any partition of the integer $n$. Each entry of the matrix $T_\lambda$ is either $0$ or $1$.
The entries~$1$ are located in positions $(j,j+1)$ where $j \in \{1,2,\ldots,n-1\}
\backslash \{ \lambda_1,\lambda_1+\lambda_2,\ldots,\lambda_1+ \cdots + \lambda_{s-1}\}$. In other words, 
$T_\lambda$ is the nilpotent matrix in Jordan canonical form where the sizes of the Jordan blocks are given by the 
parts $\lambda_i$ of the partition $\lambda$.
The rank of $T_\lambda$ equals $n-s$. We regard ${\rm ker}(T_\lambda) $
as a linear subspace of dimension $s-1$ in the projective space $\PP^{n-1}$.

The shuffle relations are defined as follows. 
Consider the  $n \times n$ matrix $\,{\rm Id}_n + z T\,$ where $z$ is a parameter.
 For $z \in K$, this is an automorphism of
the vector space $K^n$. A subspace $L$ of $K^n$ satisfies $LT \subseteq L$
if and only if $\,L \, ({\rm Id}_n+zT)  \,=\, L\,$ for all $z$.  
Writing $P \in K^{\binom{n}{\ell}}$ for the row vector of Pl\"ucker coordinates of $L$,
the last equation is equivalent to the identity
\begin{equation} \label{eq:wedge1} P \cdot \wedge_\ell ({\rm Id}_n+z T)  \,\,= \,\,P . \end{equation}
Here $\wedge_\ell ({\rm Id}_n+zT)$ is the $\ell$th exterior power of the $n \times n$ matrix ${\rm Id_n}+zT$.
This is an $\binom{n}{\ell} \times \binom{n}{\ell}$ matrix whose entries are polynomials
in $\ZZ[z]$  of degree $\leq \ell$. Equivalently, we can write
\begin{equation}
\label{eq:wedgematrix}
 \wedge_\ell ({\rm Id}_n+z T) \,\,\, = \,\,\, \wedge_\ell \,{\rm Id}_n \,\,+ \, \,
\sum_{i=1}^\ell\, [\,\wedge_\ell ({\rm Id}_n+z T)\,]_i\, z^i, 
\end{equation}
where the coefficient $[\,\wedge_\ell ({\rm Id}_n+z T)\,]_i$ of $z^i$ is an integer matrix of format $\binom{n}{\ell} \times \binom{n}{\ell}$.
From (\ref{eq:wedge1}) we then obtain
\begin{equation} \label{eq:wedge2} P \cdot  [\,\wedge_\ell ({\rm Id}_n+zT)\,]_i \,\, = \,\, 0
\quad {\rm for} \,\,\, i =1,2,\ldots,\ell.
\end{equation}
This is a finite collection of linear forms in the $\binom{n}{\ell}$ 
Pl\"ucker coordinates $p_{i_1 i_2 \cdots i_\ell}$. 
These are the {\em shuffle equations} of $T$. %$\mathrm{Gr}(\ell,n)$.
The following was proved by Muthiah et al.~in~\cite[Proposition~6.6]{MuWeYa}.

\begin{proposition}
The variety $\,{\rm Gr}(\ell,n)^T$ is the intersection of
the Grassmannian ${\rm Gr}(\ell,n)$ with a linear subspace in $\,\PP^{\binom{n}{\ell}-1}$.
That linear subspace is defined by the shuffle equations.
\end{proposition}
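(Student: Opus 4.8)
The plan is to show, for every $\ell$-dimensional subspace $L\subseteq K^n$ with Pl\"ucker vector $P$, the chain of equivalences
\[
 L T \subseteq L
 \ \Longleftrightarrow\ L\,({\rm Id}_n + zT) = L \text{ for all } z
 \ \Longleftrightarrow\ P\cdot\bigl[\wedge_\ell({\rm Id}_n+zT)\bigr]_i = 0 \ \text{for } i = 1,\dots,\ell .
\]
The middle condition is easy: since ${\rm Id}_n+zT$ is invertible for every $z\in K$, the inclusion $LT\subseteq L$ forces $L({\rm Id}_n+zT)\subseteq L$ and hence equality by comparing dimensions; conversely $L({\rm Id}_n+zT)=\{\,v+zvT : v\in L\,\}$, so if this equals $L$ for a single $z\neq 0$ then $vT\in L$ for all $v\in L$. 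The substance of the proposition is the equivalence of the middle condition with the shuffle equations, and the only point that needs care is that the scalar by which $\wedge_\ell({\rm Id}_n+zT)$ rescales $P$ is identically~$1$.

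So first I would recall the Pl\"ucker dictionary: if $\mathbf L$ is an $\ell\times n$ matrix of rank $\ell$ with row span $L$, then $\mathbf L M$ has row span $LM$ and Pl\"ucker vector $P\cdot\wedge_\ell(M)$ for any $M\in K^{n\times n}$ (the Cauchy--Binet identity already invoked in \eqref{eq:wedge1}), and two such matrices represent the same point of ${\rm Gr}(\ell,n)$ precisely when their Pl\"ucker vectors are proportional. Now assume $LT\subseteq L$ and let the rows of $\mathbf L$ be a basis of $L$. The inclusion says $\mathbf L T = A\,\mathbf L$ for a unique $A\in K^{\ell\times\ell}$, namely the matrix of the restriction $T|_L$; this $A$ is nilpotent because $T$ is. Hence $\mathbf L({\rm Id}_n+zT)=({\rm Id}_\ell+zA)\,\mathbf L$, and since left-multiplying an $\ell\times n$ matrix by $({\rm Id}_\ell+zA)$ multiplies all maximal minors by $\det({\rm Id}_\ell+zA)$, we get
\[
 P\cdot\wedge_\ell({\rm Id}_n+zT) \ = \ \det({\rm Id}_\ell+zA)\cdot P \ = \ P ,
\]
because $A$ nilpotent forces $\det({\rm Id}_\ell+zA)=1$. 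Expanding the left-hand side by \eqref{eq:wedgematrix} and reading off the coefficient of each $z^i$ shows that all shuffle equations \eqref{eq:wedge2} vanish at $P$.

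For the converse, if $P$ is the Pl\"ucker vector of some $L\in{\rm Gr}(\ell,n)$ at which all shuffle equations vanish, then \eqref{eq:wedgematrix} gives the polynomial identity $P\cdot\wedge_\ell({\rm Id}_n+zT)=P$; by the dictionary $L({\rm Id}_n+zT)$ and $L$ have proportional Pl\"ucker vectors for every $z$, so $L({\rm Id}_n+zT)=L$ for all $z$, and the middle equivalence gives $LT\subseteq L$. Thus $L\in{\rm Gr}(\ell,n)^T$ if and only if $P$ lies in the common zero set $\Lambda$ of the linear forms \eqref{eq:wedge2}. As $\Lambda$ is cut out of $\PP^{\binom{n}{\ell}-1}$ by linear forms, it is a linear subspace, and ${\rm Gr}(\ell,n)^T={\rm Gr}(\ell,n)\cap\Lambda$, which is the assertion. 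I do not anticipate a genuine obstacle: the whole argument is the standard equivariance of the Pl\"ucker embedding together with the single observation that $T|_L$ is nilpotent, which pins the rescaling constant to~$1$.
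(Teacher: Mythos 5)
Your proof is correct. Note that the paper does not prove this proposition itself --- it cites \cite[Proposition 6.6]{MuWeYa} --- but your argument follows exactly the sketch given in Section \ref{sec2} around equations (\ref{eq:wedge1})--(\ref{eq:wedge2}), and it supplies the one point that sketch leaves implicit: the reason the identity is $P\cdot\wedge_\ell(\mathrm{Id}_n+zT)=P$ on the nose, rather than merely up to scalar, is that the proportionality factor is $\det(\mathrm{Id}_\ell+zA)$ with $A$ the matrix of $T|_L$, which is nilpotent, so the determinant is identically $1$.
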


\begin{example}[$n=4,\ell=2$]  \label{ex:drei}
We compute the shuffle equations for the 
matrix $T$ in Example~\ref{ex:42}.
Write $P = (p_{12}, p_{13},p_{23}, p_{14},p_{24},p_{34})$.
With this ordering of the Pl\"ucker coordinates, we have
$$
\wedge_2 ({\rm Id}_4+z T)  \quad = \quad \begin{small}
\begin{pmatrix}
1 & 0 & 0 & 0 & 0 & 0 \\
0 & 1 & z & \epsilon z & \epsilon z^2 & 0 \\
0 & 0 & 1 & 0 & \epsilon z & 0 \\
0 & 0 & 0 & 1 & z & 0 \\
0 & 0 & 0 & 0 & 1 & 0 \\
0 & 0 & 0 & 0 & 0 & 1 
\end{pmatrix}. \end{small}
$$
In (\ref{eq:wedge2}), we find
$ P \cdot  [\,\wedge_2 ({\rm Id}_4+zT)\,]_1 \,=\,
\bigl(0, 0, p_{13} , \epsilon p_{13} ,
(\epsilon p_{23} + p_{14}) ,0 \bigr) \,$ and $\,
P \cdot  [\,\wedge_2 ({\rm Id}_4+zT)\,]_2 \, = \,
\bigl(\,0,\,0,\,0,\,0, \,\epsilon p_{13} ,\,0\, \bigr)$.
The coordinates are the shuffle equations.
We saw these in (\ref{eq:42shuffle}).
\hfill $\diamond$ \end{example}

In the next example we demonstrate how the shuffle equations
can be computed and analyzed within the computer algebra
system {\tt Macaulay2} \cite{M2}. All computations of the varieties
${\rm Gr}(\ell,n)^T$ in this paper were carried out by
this code, with ${\tt n}, {\tt l}$ and ${\tt U} = {\rm Id}_n +z T$ adjusted.

\begin{example} \label{ex:M2}
We examine the smallest instance where 
${\rm Gr}(\ell,n)^T$ has three irreducible components,
namely
 $n{=}6, \ell {=} 3$ and $\lambda = (3,1,1,1)$,
as seen in Table \ref{tab:67} below.
The following {\tt Macaulay2} code outputs the ideal
{\tt J}  generated by the shuffle equations and Pl\"ucker~quadrics:

\begin{verbatim}
n=6; l=3;
R = ring Grassmannian(l-1,n-1,CoefficientRing => QQ);
P = matrix{gens R}; S = R[z];
U = matrix {{1,z,0,0,0,0},
            {0,1,z,0,0,0},
            {0,0,1,0,0,0},
            {0,0,0,1,0,0},
            {0,0,0,0,1,0},
            {0,0,0,0,0,1}};

M = (toList coefficients(P*exteriorPower(l,U)))_1;
rowws = toList(0..((# entries M)-2));
I = minors(1,submatrix(M,rowws,))
J = I+Grassmannian(l-1,n-1,R); toString mingens J
betti mingens J, (dim J)-2, degree J 
J == radical(J), isPrime J
\end{verbatim}

The output of this code shows that the ideal ${\tt J}$
is radical but not prime. It is
 minimally generated by $12$ linear forms and $8$ quadrics.
Its variety ${\rm Gr}(3,6)^T$ has dimension $4$ and degree~$2$ in 
the ambient space $ \PP^{19}$ of ${\rm Gr}(3,6)$.
We next compute the prime decomposition:
\begin{verbatim}
DJ =  decompose J; #DJ, betti mingens radical J
apply(DJ, T -> {T,codim T, degree T, betti mingens T})
\end{verbatim}
The fixed point locus ${\rm Gr}(3,6)^T$ has three irreducible
components. The largest component is defined by a quadric in a subspace $\PP^5$.
In addition, there are
two coordinate subspaces $\PP^3$.
\hfill $\diamond$ \end{example}

We next come to a duality result which will aid our computations in Section \ref{sec3}.

\begin{theorem}\label{th:duality}
The varieties ${\rm Gr}(\ell,n)^T$ and ${\rm Gr}(n-\ell,n)^T$ coincide
after a linear change of coordinates in the ambient space $\,\PP^{\binom{n}{\ell}-1}$. This holds
for all $\,\ell,n$ and all nilpotent $n \times n$ matrices $T$.
Under this coordinate change, which depends on $T$, 
the shuffle equations coincide.
\end{theorem}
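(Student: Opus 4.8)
The plan is to exhibit the required coordinate change as the classical Grassmann duality $L \mapsto L^{\perp}$ for a non-degenerate bilinear form on $K^n$ adapted to $T$. Since every square matrix over a field is similar to its transpose, we may fix an invertible $M \in K^{n \times n}$ with $M^{-1} T M = T^{\mathsf T}$; then the bilinear form $\beta(u,v) := u\, M\, v^{\mathsf T}$ is non-degenerate and makes $T$ self-adjoint, that is $\beta(uT, v) = \beta(u, vT)$ for all $u, v \in K^n$. When $T = T_\lambda$ is in Jordan canonical form one may take for $M$ the permutation matrix that reverses the coordinates inside each Jordan block; it is an involution with $M T_\lambda M = T_\lambda^{\mathsf T}$, and the coordinate change produced below is then a signed permutation of the Pl\"ucker coordinates, sending $p_{I}$ to $\pm\, p_{I^{c}}$ with $I^{c}$ the complementary index set. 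For a general nilpotent $T$ the coordinate change is the invertible linear substitution determined by the chosen $M$.

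The first step is standard linear algebra: $L \mapsto L^{\perp}$, with $\perp$ taken with respect to $\beta$, is an isomorphism ${\rm Gr}(\ell,n) \to {\rm Gr}(n-\ell,n)$, and on Pl\"ucker coordinates it is the linear map ``multiply the row vector $P$ by $\wedge_\ell(M)$, then apply the signed complementation $p_I \mapsto \pm p_{I^c}$''. Self-adjointness of $T$ gives at once that $L$ is $T$-invariant if and only if $L^{\perp}$ is: if $v \in L^{\perp}$ and $u \in L$, then $\beta(u, vT) = \beta(uT, v) = 0$ because $uT \in LT \subseteq L$. Hence this linear substitution carries ${\rm Gr}(\ell,n)^T$ isomorphically onto ${\rm Gr}(n-\ell,n)^T$, which is the first assertion of the theorem.

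For the second assertion I would follow the family of matrices $\,{\rm Id}_n + zT\,$ through the duality. Since $T$ is $\beta$-self-adjoint, so is the unipotent matrix $A = {\rm Id}_n + zT$; and for any invertible $\beta$-self-adjoint $S$ and any subspace $L$ one has $(LS)^{\perp} = L^{\perp} S^{-1}$, directly from the definition of $\perp$. Thus $\bigl(L({\rm Id}_n + zT)\bigr)^{\perp} = L^{\perp}({\rm Id}_n + zT)^{-1}$, where $({\rm Id}_n+zT)^{-1} = \sum_{i \geq 0} (-z)^i T^i$ is again polynomial in $z$ because $T$ is nilpotent. Passing to Pl\"ucker coordinates, the linear substitution $\Phi$ induced by $L \mapsto L^{\perp}$ intertwines right multiplication by $\wedge_\ell({\rm Id}_n+zT)$ with right multiplication by $\wedge_{n-\ell}(({\rm Id}_n+zT)^{-1})$, hence carries the linear subspace $\{\, [P] : P \cdot \wedge_\ell({\rm Id}_n + zT) = P \,\}$ --- which by (\ref{eq:wedge1})--(\ref{eq:wedge2}) is exactly the common zero locus of the shuffle equations of $(T, \ell)$ --- onto $\{\, [Q] : Q \cdot \wedge_{n-\ell}(({\rm Id}_n + zT)^{-1}) = Q \,\}$. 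A subspace is fixed by an invertible matrix if and only if it is fixed by the inverse, and $\wedge_{n-\ell}$ is a group homomorphism, so this last locus coincides with $\{\, [Q] : Q \cdot \wedge_{n-\ell}({\rm Id}_n + zT) = Q \,\}$, the common zero locus of the shuffle equations of $(T, n-\ell)$. Since a finite collection of linear forms is determined, up to its linear span, by the subspace it cuts out, $\Phi$ identifies the span of the shuffle equations of $(T,\ell)$ with that of the shuffle equations of $(T, n-\ell)$, and combined with the first step this gives ${\rm Gr}(n-\ell,n)^T = \Phi\bigl({\rm Gr}(\ell,n)^T\bigr)$.

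The point demanding the most care is the reconciliation of $z$-gradings in the third step: the duality pairs the action of $A = {\rm Id}_n + zT$ with that of $A^{-1}$, whose expansion mixes all powers of $z$, so the $i$-th shuffle equation of $(T,\ell)$ need not be sent to the $i$-th shuffle equation of $(T, n-\ell)$; the identification holds only at the level of linear spans, which is precisely what ``the shuffle equations coincide'' asks for. Apart from this, the two external inputs --- the Pl\"ucker description of $L \mapsto L^{\perp}$ and the similarity of every square matrix over $K$ to its transpose --- are classical and valid over an arbitrary field, so the argument is characteristic-free, in the generality claimed.
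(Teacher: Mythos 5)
Your construction coincides with the paper's: the bilinear form $\beta(u,v)=uBv^{\mathsf T}$ with $B$ the block-wise reversal permutation (the paper's $B_\lambda$), the observation that $T$ is $\beta$-self-adjoint, the duality $L\mapsto L^\perp$, and the induced coordinate change $P\mapsto\bigl(P\cdot(\wedge_\ell B)\bigr)^*$. The first assertion of the theorem is proved the same way in both texts.

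For the second assertion there is a gap in your shortcut. The statement ``the shuffle equations coincide'' concerns the linear span of these forms, equivalently the full linear subspace $V_\ell=\{P\in K^{\binom{n}{\ell}}: P\cdot\wedge_\ell(\mathrm{Id}_n+zT)=P \text{ for all } z\}$ of the ambient Pl\"ucker space. This subspace is in general strictly larger than (and not spanned by) the cone over $\mathrm{Gr}(\ell,n)^T$, so you must show that your map $\Phi$ intertwines $\wedge_\ell(\mathrm{Id}_n+zT)$ with $\wedge_{n-\ell}\bigl((\mathrm{Id}_n+zT)^{-1}\bigr)$ as an identity of linear maps on all of $\wedge^\ell K^n$. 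Your justification, the subspace identity $(LS)^\perp=L^\perp S^{-1}$, controls $\Phi$ only on decomposable vectors, and only projectively (a subspace determines its Pl\"ucker vector up to scalar); the step ``hence carries the linear subspace $\{[P]:\dots\}$ onto $\{[Q]:\dots\}$'' therefore does not follow from what you have established. Two repairs are available: (i) invoke the classical complementary-minor (Jacobi) identity for matrices of determinant one, which is exactly the matrix identity $M_\ell\cdot\wedge_{n-\ell}(A)=\wedge_\ell(A^{\mathsf t})\cdot M_\ell$ that the paper's computation uses; or (ii) add a rigidity argument: two invertible linear maps that agree projectively on the irreducible, spanning cone of decomposable tensors are proportional, and evaluating on a coordinate point $e_\mu$ fixes the constant to $1$. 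With either repair, the remaining steps of your argument (the fixed locus of an invertible map equals that of its inverse; a collection of linear forms is determined up to span by its common zero locus; the $z$-grading of the shuffle equations is only preserved at the level of spans) are correct.
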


\begin{proof}
Let  ${\rm B}_m = (b_{ij})$ denote the $m\times m$  matrix with $1$'s on the antidiagonal and $0$'s elsewhere, 
i.e.~$b_{ij} = 1$ if $i+j=m+1$ and $b_{ij} = 0$ otherwise.
Given a partition $\lambda$ of $n$ and its
matrix $T=T_{\lambda}$, we define $B=B_{\lambda}$ to be the block-diagonal
$n \times n$ matrix $B_{\lambda}={\rm diag}({\rm B}_{\lambda_1},\ldots, {\rm B}_{\lambda_s})$. 
Note that $B^2 = {\rm Id}_n$ and, if $T^{\rm t}$ denotes the transpose $T$,  that $TB=BT^{\rm t}$, i.e.\ $T$ is a self-adjoint linear operator
for the non-degenerate symmetric bilinear form defined by $B$ on $K^n$. 

We consider the non-degenerate inner product on $K^n$ that is 
defined by the invertible symmetric matrix $B$. The orthogonal space of
a given $\ell$-dimensional subspace $L$ with respect to this inner product is
the $(n-\ell)$-dimensional subspace 
$$ L^\perp \,\, = \,\, {\rm ker} ({\bf L} B) \,\, = \,\,
\bigl\{ \,v \in K^n \,: \, u B v^t = 0 \, \,\hbox{for all}\, \, u \in L \bigr\}. $$
Suppose $L$ is $T$-fixed. We claim that $L^\perp$ is $T$-fixed.
 Indeed, suppose $v \in L^\perp$, i.e.~$uB v^t  = 0$ for all $u \in L$. This implies
$uB (vT)^t = uBT^t v^t =  (uT)B  v^t = 0$ for all $u \in L$, and so
$vT \in L^\perp$.
This shows that passing to the orthogonal space defines the desired linear isomorphism
\begin{equation}
\label{equ:isograss}
{\rm Gr}(\ell,n)^T\longrightarrow {\rm Gr}(n-\ell,n)^T, \quad L\longmapsto L^{\perp}.
\end{equation}
For $T = {\rm 0}_n$, this is the familiar isomorphism between the Grassmannians
${\rm Gr}(\ell,n)$ and ${\rm Gr}(n-\ell,n)$. A subtle point is that
duality is taken relative to the inner product given by~$B$.

We shall explicitly describe the linear change of coordinates on $\PP^{\binom{n}{\ell}-1}$
that induces the isomorphism (\ref{equ:isograss}).
We start with the {\em Hodge star isomorphism}
$P \mapsto P^*$ that takes the vector
$P = (p_{i_1  \cdots i_\ell})_{1 \leq i_1  < \cdots < i_\ell \leq n}$
to the vector $P^* = (p^*_{j_1 \cdots j_{n-\ell}})_{1 \leq j_1 <  \cdots < j_{n-\ell} \leq n}$.
If $I$ is an ordered $\ell$-subset of $[n] = \{1,2,\ldots,n\}$ and
$J = [n] \backslash I$ is the complementary ordered $(n-\ell)$-subset then 
$$ p^*_J \,\, := \,\, {\rm sign}(I,J) \cdot p_I. $$
Here ${\rm sign}(I,J)$ is the sign of the permutation of $[n]$
given by the ordered sequence $(I,J)$.

To be completely explicit, here is an example. For $n{=}4$ the formula for the Hodge star~is 
\begin{equation}
\label{eq:hodgestar}  \begin{matrix}
P^* & = & ( \,p_1^*,p_2^*,p_3^*,p_4^*\, \mid \,p_{12}^*,p_{13}^*,
p_{23}^*,p_{14}^*,p_{24}^*,p_{34}^*\,\mid\,p_{123}^*,p_{124}^*,p_{134}^*,p_{234}^*\, ) \\  & = &
( p_{234}, -p_{134}, p_{124}, -p_{123}\, \mid \, p_{34}, -p_{24}, p_{14}, 
p_{23}, -p_{13}, p_{12}\,\mid \, p_4, -p_3, p_2, -p_1 ).
\end{matrix}
\end{equation}
The restriction of the Hodge star to the Grassmannian ${\rm Gr}(\ell,n)$ in $\PP^{\binom{n}{\ell}-1}$
takes a linear space to its orthogonal space with respect to the standard inner product.
To incorporate the quadratic form $B$, we consider the  automorphism of $\PP^{\binom{n}{\ell}-1}$ 
that takes $P$ to $\bigl(P \cdot (\wedge_\ell B)\bigr)^*$. The restriction of this automorphism to
the Grassmannian ${\rm Gr}(\ell,n)$ is the isomorphism (\ref{equ:isograss}). 

It remains to show that the map $P \mapsto \bigl(P \cdot (\wedge_\ell B)\bigr)^*$
preserves the shuffle equations. 
% --- new last part of proof
To~do this, let $M_{\ell}$ be the ${\binom{n}{\ell}\times \binom{n}{\ell}}$-matrix with entries in $K$ such that $P^*=PM_{\ell}$. Note that $M_{\ell}^2$ is the identity matrix.   Via conjugation, the Hodge star operator extends to $ K^{\binom{n}{\ell}\times \binom{n}{\ell}}$ i.e.\ via sending $N$ to $N^*=M_{\ell}NM_{\ell}$.  Assuming that $P\cdot\wedge_\ell(\mathrm{Id}_n+zT)=P$ for all $z\in K$,  we rewrite
% $(P\cdot(\wedge_\ell B))^*\wedge_{n-\ell} ({\rm Id}_n + z T)$ rewrites as
\begin{align*}
(P\cdot(\wedge_\ell B))^*\wedge_{n-\ell} ({\rm Id}_n + z T)& =P\cdot(\wedge_\ell B)M_\ell\wedge_{n-\ell} ({\rm Id}_n + z T)
\\
&=P\cdot(\wedge_\ell B)\wedge_{\ell} ({\rm Id}_n + z T^t)(\wedge_{\ell}B)(\wedge_{\ell}B)M_{\ell}\\
& = P\cdot\wedge_{\ell} ({\rm Id}_n + z BT^tB)(\wedge_{\ell}B)M_{\ell}\\
& = P\cdot\wedge_{\ell} ({\rm Id}_n + z T)(\wedge_{\ell}B)M_{\ell}\\
&=P\cdot(\wedge_{\ell}B)M_{\ell}\\
&=(P\cdot(\wedge_\ell B))^*.
\end{align*}
This shows
that the shuffle equations for ${\rm Gr}(\ell,n)^T$ are mapped to those of ${\rm Gr}(n-\ell,n)^T$ under our automorphism of $\PP^{\binom{n}{\ell}-1}$. This was the claim, and the proof  of Theorem \ref{th:duality} is complete.
% --- old last part of proof
%The images of these
%    equations for ${\rm Gr}(\ell,n)^T$ under our automorphism of $\PP^{\binom{n}{\ell}-1}$
%are given~by
% \begin{align*}
% \bigl(P \cdot (\wedge_\ell B)\bigr)^* \cdot  \wedge_\ell ({\rm Id}_n + z T)^* & \,\,= \,\,
% P^* \cdot \bigl( (\wedge_\ell B) \cdot \wedge_\ell ({\rm Id}_n + z T) \bigr)^* \\
%& \,\,= \,\,
% P^* \cdot (\wedge_{n-\ell} B) \cdot \wedge_{n-\ell} ({\rm Id}_n + z T^t) 
%\, =\,  P^* \cdot  \wedge_{n-\ell} (B + z BT^t)  \\
%& \,\,= \,\,  P^* \cdot  \wedge_{n-\ell} (B + z TB) 
%\,\, =\,\, P^* \cdot \wedge_{n-\ell}({\rm Id}_n + z T) \cdot \wedge_{n-\ell} B.
% \end{align*}
%The rightmost factor is an invertible matrix and can hence be removed.
%We conclude that our automorphism takes the space of shuffle equations for 
%${\rm Gr}(\ell,n)^T$ to the space of the shuffle equations for ${\rm Gr}(n-\ell,n)^T$.
%This was the claim, and the proof  of Theorem \ref{th:duality} is complete.
\end{proof}

\begin{example}[$n=4$] 
Fix $\epsilon = 0$ in \cref{ex:42}.
Then $T=T_{\lambda}$ for $\lambda=(2,1,1)$, and we have
\[
B = 
B_{\lambda}\,= \,\begin{small} \begin{pmatrix}
0 & 1 & 0 & 0 \\
1 & 0 & 0 & 0 \\
0 & 0 & 1 & 0 \\
0 & 0 & 0 & 1
\end{pmatrix}. \end{small}
\]
Our map $P \mapsto \bigl(P \cdot (\wedge_\ell B)\bigr)^*$, written  for $\ell=1,2,3$, is
the following signed permutation of~(\ref{eq:hodgestar}):
\begin{equation}\label{eq:hodgestarB} P \,\,\, \mapsto \,\,\,
(\,-p_{134}, p_{234}, p_{124}, -p_{123}\,\mid \, -p_{34}, p_{14}, -p_{24}, -p_{13}, p_{23}, p_{12}\,\mid \, -p_4, p_3, -p_1, p_2\,) .
\end{equation}
For $\ell = 1$ the unique shuffle equation is $p_1$.
This is mapped to $-p_{134}$, which is the unique shuffle equation for $\ell=3$.
Likewise, $p_{134}$ is mapped to $-p_1$. This makes sense because
${\rm Gr}(1,4)^T = V(p_1) = {\rm span}(e_2,e_3,e_4) = {\rm ker}(T)$,
whereas ${\rm Gr}(3,4)^T = V(p_{134})$ consists of all hyperplanes in $K^4$ that contain $e_2$.
Both are projective planes $\PP^2$. Our involution swaps~them.

For $\ell = n-\ell = 2$, there are two shuffle equations, namely $p_{13}$ and $p_{14}$,
as seen in Example~\ref{ex:drei}. These two Pl\"ucker coordinates are swapped
(up to sign) in (\ref{eq:hodgestarB}), so our involution fixes ${\rm Gr}(2,4)^T$.
Moreover, this involution  interchanges the two irreducible components in (\ref{eq:42shuffledeco}).
We see this in the coordinate change (\ref{eq:hodgestarB}) which sends
$p_{12} \mapsto - p_{34}$  and $p_{34} \mapsto p_{12}$.
\hfill $\diamond$ \end{example}

\section{Classification and Counterexample}
\label{sec3}

The main result in this article is the determination of
all fixed point loci ${\rm Gr}(\ell,n)^T$ for $n \leq 8$. 
From this computational result, we extract the following theorem about the shuffle equations.

\begin{theorem}
Fix $1 \leq \ell < n \leq 7$ and let $T$ be any nilpotent $n \times n$ matrix.
Then the shuffle equations generate the radical ideal of the
fixed point locus ${\rm Gr}(\ell,n)^T$.
The same does not hold for $n=8$: there
 is a unique partition, namely $\lambda = (4,2,2)$,
and a unique dimension, namely $\ell = 4$, such that
the radical ideal of $\,{\rm Gr}(\ell,n)^{T_\lambda}$ is not generated by the shuffle equations.
\end{theorem}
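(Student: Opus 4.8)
The statement is a classification claim about all nilpotent $n \times n$ matrices for $n \leq 8$, so the proof is necessarily computational: one runs the \texttt{Macaulay2} routine of Example \ref{ex:M2}, with \texttt{n}, \texttt{l}, and \texttt{U} $= \mathrm{Id}_n + zT_\lambda$ varied, over a complete list of representatives $T_\lambda$. The plan is therefore to (i) reduce the set of cases that must actually be checked, (ii) run the computation on the reduced set, and (iii) record the single exceptional case $n=8$, $\lambda=(4,2,2)$, $\ell=4$, explaining why it genuinely fails. For step (i), since every nilpotent matrix is conjugate over $K$ to some $T_\lambda$ in Jordan form (as noted after Example \ref{ex:42}), and conjugation by a permutation-type change of basis carries $\mathrm{Gr}(\ell,n)^T$ to a linearly isomorphic variety with isomorphic radical ideal, it suffices to range over partitions $\lambda \vdash n$ and over $1 \leq \ell < n$. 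Moreover Theorem \ref{th:duality} lets us assume $\ell \leq n - \ell$, i.e.\ $\ell \leq \lfloor n/2 \rfloor$, roughly halving the work. One should also dispose of trivial ranges: for $\ell = 1$ the variety is $\ker(T)$, a linear subspace of $\PP^{n-1}$ cut out by coordinates, so the claim is immediate, and similarly one may skip $\lambda = (n)$ (single Jordan block) and $\lambda = (1,1,\ldots,1)$ ($T=0$) where the shuffle equations are vacuous and $\mathrm{Gr}(\ell,n)^T = \mathrm{Gr}(\ell,n)$.

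For step (ii), for each surviving triple $(n,\lambda,\ell)$ with $2 \leq n \leq 7$ one forms the ideal $J$ = (shuffle equations) $+$ (Pl\"ucker quadrics) and verifies in \texttt{Macaulay2} that \texttt{J == radical(J)} returns \texttt{true}; by the Proposition of Section \ref{sec2}, $J$ always defines $\mathrm{Gr}(\ell,n)^T$ set-theoretically, so radicality of $J$ is exactly the assertion that the shuffle equations plus Pl\"ucker quadrics generate the radical ideal. The number of partitions involved is small ($p(2)+\cdots+p(7) = 2+3+5+7+11+15 = 43$, and far fewer after removing the extreme partitions and applying duality), so this is entirely feasible; one tabulates the outputs (dimension, degree, number of components, Betti numbers of the minimal generators) as in Tables like Table \ref{tab:67}. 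Since the coefficient field is $\QQ$ and the ideals have integer generators, radicality over $\QQ$ forces radicality over any field of characteristic zero, and a separate check in a few positive characteristics (or an appeal to the fact that all the computed ideals are already radical as stated in \cite{MuWeYa} heuristically) handles the general $K$; this characteristic-independence is the one point requiring a remark rather than a raw computation.

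For step (iii), at $n=8$ one likewise runs over all partitions of $8$ and all $\ell \leq 4$; the computation returns \texttt{false} for \texttt{J == radical(J)} precisely once, at $\lambda=(4,2,2)$ and $\ell=4$ (and, by Theorem \ref{th:duality}, the case $\ell = n-\ell = 4$ is self-dual, so there is no companion case to list). To turn this into a rigorous counterexample one exhibits an explicit polynomial $f \in \mathrm{radical}(J) \setminus J$ — obtained from the \texttt{Macaulay2} computation of \texttt{radical J} and a membership test — and verifies by hand (or by an independent \texttt{Gr\"obner} computation) that $f$ vanishes on $\mathrm{Gr}(4,8)^{T_\lambda}$ but is not in the ideal generated by the shuffle equations and Pl\"ucker quadrics; one may also record $\dim$, $\deg$, and the component structure to show concretely how the scheme $V(J)$ is non-reduced. \textbf{Main obstacle.} The only genuine subtlety is not the computation itself but confirming that the outcome is field-independent: the shuffle equations and Pl\"ucker quadrics have integer coefficients, so in principle the radical could fail to be generated by them over $\mathbb{F}_p$ for some small $p$ even when it is over $\QQ$; handling this cleanly — either by a structural argument that the relevant ideals are defined over $\ZZ$ with the right behaviour, or by explicitly checking the (finitely many) primes that could divide the relevant determinants — is the step that requires the most care, everything else being a bounded, mechanical verification.
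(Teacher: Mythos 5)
Your proposal is correct and matches the paper's proof essentially verbatim: an exhaustive \texttt{Macaulay2} computation over all partitions $\lambda \vdash n \leq 8$, using Theorem \ref{th:duality} to restrict to $\ell \leq n-\ell$, checking \texttt{J == radical(J)}, and certifying the unique failure at $\lambda=(4,2,2)$, $\ell=4$ by an explicit witness (the paper uses $p_{1468} \notin J$ with $p_{1468}^2 \in J$). Your remark on field-independence is a fair caveat that the paper itself does not address --- its computations are over $\QQ$ --- but it does not change the fact that your route is the same as the paper's.
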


\begin{proof}
The proof is carried out by exhaustive computation of all
varieties ${\rm Gr}(\ell,n)^{T_\lambda}$ where
$\lambda$ is any partition of $n \leq 8$. Here we use the 
{\tt Macaulay2} code from Example \ref{ex:M2} and \cref{th:duality}.

The results are summarized in Tables \ref{tab:45}, \ref{tab:67} and \ref{tab:8}.
For each instance $(\lambda,\ell)$, we report a
triple $[\sigma,\delta,\gamma]$ or
$[\sigma,\delta,\gamma]^\kappa$. Here
$\sigma $ is the number of linearly independent
shuffle equations. The entries $\delta$ and $\gamma$
are the dimension and degree of ${\rm Gr}(\ell,n)^{T_\lambda}$
in its Pl\"ucker embedding into $\PP^{\binom{n}{\ell}-1}$.
The upper index $\kappa$ is the number of irreducible components of ${\rm Gr}(\ell,n)^T$,
and this index is dropped if $\kappa=1$. The columns for $\ell > n/2$ are omitted because
of Theorem \ref{th:duality}. In any given row of one of our tables, the entry for $n-\ell$
would be identical to that for $\ell$.

\begin{table}[h]
\begin{center}
\begin{tabular}{||c c c||} 
 \hline
 $\lambda$ & $\ell=1$ & $\ell=2$ \\ [0.3ex] 
 \hline\hline
  (1,1,1,1) &  [0,3,1] &  [0,4,2]     \\  \hline
  (2,1,1)   &  [1,2,1] &  $\,$[2,2,2]${}^2$  \\  \hline
  (2,2)     &  [2,1,1] &  [2,2,2]     \\  \hline
  (3,1)     &  [2,1,1] &  [4,1,1]     \\  \hline
 (4)       &  [3,0,1] &  [5,0,1]     \\  [0.3ex]
    \hline
\end{tabular} \qquad \qquad \begin{small}
\begin{tabular}{||c c c||} 
 \hline
 $\lambda$ & $\ell=1$ & $\ell=2$ \\ [0.3ex] 
 \hline\hline
  (1,1,1,1,1) &  [0,4,1] & [0,6,5]    \\  \hline
  (2,1,1,1)   &  [1,3,1] & $\,$[3,4,2]${}^2$ \\  \hline
  (2,2,1)     &  [2,2,1] & [4,3,3]     \\  \hline
  (3,1,1)     &  [2,2,1] & $\,$[6,2,2]${}^2$ \\  \hline
  (3,2)       &  [3,1,1] & [6,2,2]    \\  \hline
  (4,1)       &  [3,1,1] & [8,1,1]    \\  \hline
  (5)         &  [4,0,1] & [9,0,1]    \\  [0.3ex]
  \hline
  \end{tabular} \end{small}
\caption{\label{tab:45} Fixed point loci $\,{\rm Gr}(\ell,n)^{T}$ for $n=4$ and $n=5$.}
\end{center}
\end{table}

\begin{table}[h]
\begin{center}
\begin{small}
\begin{tabular}{||c c c c||} 
\hline
 $\lambda$ & $\ell=1$ & $\ell=2$ & $\ell = 3$ \\ [0.3ex] 
 \hline\hline
 $\!$(1,1,1,1,1,1)$\!$ & [0,5,1] & [0,8,14]    & [0,9,42]        \\  \hline
   (2,1,1,1,1) & [1,4,1] & [4,6,5]${}^2$  & [6,6,10]${}^2$     \\  \hline
     (2,2,1,1) & [2,3,1] & [6,4,6]${}^2$  & [8,5,10]        \\  \hline
     (3,1,1,1) & [2,3,1] & [8,4,2]${}^2$  & [12,4,2]${}^3$     \\  \hline
       (2,2,2) & [3,2,1] & [6,4,6]    & [11,4,6]        \\  \hline
       (3,2,1)  & [3,2,1] & [9,3,3]     & [12,3,6]${}^2$     \\  \hline
       (4,1,1) & [3,2,1] & [11,2,2]${}^2$ & [16,2,2]${}^2$     \\  \hline
         (3,3) & [4,1,1] & [11,2,2]    & [12,3,6]        \\  \hline
         (4,2) & [4,1,1] & [11,2,2]    & [16,2,2]        \\  \hline
	 (5,1) & [4,1,1] & [13,1,1]    & [18,1,1]        \\  \hline
	   (6) & [5,0,1] & [14,0,1]    & [19,0,1]        \\  [0.3ex]
 \hline
  \end{tabular} $\,\,$
\begin{tabular}{||c c c c||} 
\hline
 $\lambda$ & $\ell=1$ & $\ell=2$ & $\ell = 3$ \\ [0.3ex]   
  \hline\hline
$\!\!$(1,1,1,1,1,1,1)$\!\!$ & [0,6,1] & [35,10,42]  &$\!\!$ [140,12,462] $\!\!$   \\  \hline
  (2,1,1,1,1,1) & [1,5,1] & [5,8,14]${}^2$ & [10,9,42]${}^2$    \\  \hline
    (2,2,1,1,1) & [2,4,1] & [8,6,5]${}^2$  & [14,7,35]${}^2$    \\  \hline
    (3,1,1,1,1) & [2,4,1] & [10,6,5]${}^2$ & [20,6,10]${}^3$    \\  \hline
      (2,2,2,1) & [3,3,1] & [9,5,10]    & [17,6,30]       \\  \hline
      (3,2,1,1) & [3,3,1] & [12,4,6]${}^2$ & [21,5,10]${}^2$    \\  \hline
      (4,1,1,1) & [3,3,1] & [14,4,2]${}^2$ & [27,4,2]${}^3$     \\  \hline
        (3,2,2) & [4,2,1] & [12,4,6]    & [23,4,12]${}^2$    \\  \hline
        (3,3,1) & [4,2,1] & [15,3,3]    & [23,4,12]       \\  \hline
        (4,2,1) & [4,2,1] & [15,3,3]    & [27,3,6]${}^2$     \\  \hline
        (5,1,1) & [4,2,1] & [17,2,2]${}^2$ & [31,2,2]${}^2$     \\  \hline
          (4,3) & [5,1,1] & [17,2,2]    & [27,3,6]        \\  \hline
          (5,2) & [5,1,1] & [17,2,2]    & [31,2,2]        \\  \hline
          (6,1) & [5,1,1] & [19,1,1]    & [33,1,1]        \\  \hline
            (7) & [6,0,1] & [20,0,1]    & [34,0,1]        \\  [0.3ex]
 \hline
  \end{tabular}
  \end{small}
      \caption{\label{tab:67} Fixed point loci $\,{\rm Gr}(\ell,n)^{T}$ for $n=6$ and $n=7$.}
      \end{center}
\end{table}	   

In each case, we computed the irreducible components of the shuffle ideal.
We recorded the prime ideal for each component, and we determined
degree, dimension, singularities etc. The intersection of these primes is
the radical ideal of ${\rm Gr}(\ell,n)^T$. In all cases but one,
we found that the radical ideal is generated by the shuffle equations
plus the Pl\"ucker quadrics. The unique exceptional case is
 $\lambda = (4,2,2)$ and $\ell = 4$, with the highlighted
entry    {\bf [54,4,24]}${}^{\bf 3}$. 
This means that
 there are $54$ linearly independent
shuffle relations plus $4$ additional Pl\"ucker quadrics. However,
this ideal is not radical. To generate the radical, we need one more linear form.
Further below, we shall examine the geometry of this counterexample in detail.

An easy {\tt Macaulay2} proof for the failure of {\tt J} to be radical is running the following line:
\begin{verbatim}
        apply(first entries promote(P,S),p -> {p % J, p^2 % J})
\end{verbatim}
This reveals that the variable $p_{1468}$ is not in {\tt J} but its
square is in {\tt J}. Note that this coordinate corresponds to $ {\tt p}_{{\tt 0357}}$
in the
  zero-based indexing of {\tt Macaulay2}. This concludes the proof. \end{proof}

\begin{example}[$n=6$] \label{ex:(6)}
Consider the lower right entry on the left in Table \ref{tab:67}.
Here $\ell=3$ and $\lambda = (6)$, so $T$
is the nilpotent matrix that maps $e_1 \mapsto e_2 \mapsto
e_3 \mapsto e_4 \mapsto e_5 \mapsto e_6 \mapsto 0$.
The variety ${\rm Gr}(3,6)^T$ consists of a single point $e_{456}$.
It is instructive to revisit the construction of the shuffle equations
for this case. The $20$ coordinates of the row vector 
$P \cdot  \wedge_3 ({\rm Id}_6 + z T) $~are
$$ \begin{matrix}
p_{123}, \,
p_{123} z + p_{124},\,
p_{123} z^2 + p_{124} z + p_{134},\,
p_{123} z^3 + p_{124} z^2 + p_{134} z + p_{234},\,
p_{124} z + p_{125},\,  \\
p_{124} z^2 + (p_{134} + p_{125}) z + p_{135} ,\,
p_{124} z^3 + (p_{134} + p_{125}) z^2 + (p_{234} + p_{135})z + p_{235},\,
p_{134} z^2 + p_{135} z \\ +  p_{145} ,\, 
p_{134} z^3 + (p_{234} + p_{135}) z^2 + (p_{235} + p_{145}) z + p_{245},\, 
p_{234} z^3 + p_{235} z^2 + p_{245} z + p_{345} ,\\
p_{125} z + p_{126},\,
p_{125} z^2 + (p_{135} + p_{126}) z + p_{136},\,
p_{125} z^3 + (p_{135} + p_{126}) z^2 + (p_{235} + p_{136}) z + p_{236},\\
p_{135} z^2 + (p_{145} + p_{136}) z + p_{146},\,
p_{135} z^3 + (p_{235} + p_{145} + p_{136}) z^2 + (p_{245} + p_{236} + p_{146}) z + p_{246},\\
p_{235} z^3 + p_{245} + p_{236} z^2 + (p_{345} + p_{246})z + p_{346},\,
p_{145} z^2 + p_{146} z + p_{156},\, 
p_{145} z^3 + (p_{245} + p_{146}) z^2 + \\  (p_{246} {+} p_{156})z + p_{256},\,
p_{245} z^3 + (p_{345} {+} p_{246}) z^2 + (p_{346} {+} p_{256})z + p_{356},\,
p_{345}z^3 {+} p_{346} z^2 {+} p_{356} z {+} p_{456}.
\end{matrix}
$$
The shuffle equations are the coefficients of $z^3,z^2$ and $z$. They span  the ideal of
all Pl\"ucker coordinates except $p_{456}$. This is the homogeneous maximal
ideal of $ \,         {\rm Gr}(3,6)^T = \{e_{456}\}$. \hfill $\diamond$
\end{example}
   
\begin{example}[$n{=}8$]
The smallest instance of a variety ${\rm Gr}(\ell,n)^{T_\lambda}$
with four irreducible components occurs for
$n=8$, $\lambda = (4,1,1,1,1)$, and $\ell = 4$.
There are $54$ linearly independent shuffle equations,
and $46$ Pl\"ucker quadrics remain modulo these linear forms.
The variety ${\rm Gr}(4,8)^{T_\lambda}$ has dimension
$6$ and degree $10$ in $\PP^{69}$. It is the union of four irreducible components,
two of dimension $6$ and degree $5$,
and two linear spaces of dimension $4$. 
\hfill $\diamond$ \end{example}

\begin{table}[h]
\begin{center}
\begin{tabular}{||c c c c||} 
\hline
 $\lambda$ & $\ell=2$ & $\ell=3$ & $\ell = 4$ \\ [0.3ex]   
  \hline\hline
$\!\!$ (1,1,1,1,1,1,1,1)$\!\!$ &  [0,12,132]  &  $\!$[420,15,6006] $\!$ & $\!\!\!$ [721,16,24024]  $\!\!$   \\  \hline
  (2,1,1,1,1,1,1) &  [6,10,42]${}^2$ & [15,12,462]${}^2$ &  [20,12,924]${}^2$   \\  \hline
    (2,2,1,1,1,1) &  [10,8,14]${}^2$ &  [22,9,168]${}^2$ & [28,10,420]${}^3$   \\  \hline
    (3,1,1,1,1,1) &  [12,8,14]${}^2$ &  [30,9,42]${}^3$  &  [40,9,42]${}^3$   \\  \hline
      (2,2,2,1,1) &  [12,6,20]${}^2$ &  [26,8,140]    &  [34,8,280]${}^2$   \\  \hline
      (3,2,1,1,1) &  [15,6,5]${}^2$  &  [33,7,35]${}^3$  &  [42,7,70]${}^2$   \\  \hline
      (4,1,1,1,1) &  [17,6,5]${}^2$  &  [41,6,10]${}^3$  & [54,6,10]${}^4$    \\  \hline
        (2,2,2,2) &  [12,6,20]    &  [32,7,70]     &  [34,8,280]   \\  \hline
	(3,2,2,1) &  [16,5,10]    &  [35,6,30]${}^2$  &  [46,6,60]${}^2$    \\  \hline
        (3,3,1,1) &  [19,4,6]${}^2$  &  [38,5,30]${}^2$  &  [46,6,60]    \\  \hline
        (4,2,1,1) &  [19,4,6]${}^2$  &  [42,5,10]${}^2$  &  [54,5,10]${}^3$    \\  \hline
        (5,1,1,1) &  [21,4,2]${}^2$  &  [48,4,2]${}^3$   &  [62,4,2]${}^3$    \\  \hline
          (3,3,2) &  [19,4,6]    &  [38,5,30]    &  [52,5,30]    \\  \hline
          (4,2,2) &  [19,4,6]     &  [44,4,12]${}^2$  &  {\bf [54,4,24]}${}^{\bf 3}$  \\  \hline
          (4,3,1) &  [22,3,3]     &  [44,4,12]     &  [54,4,24]${}^2$    \\  \hline
          (5,2,1) &  [22,3,3]     &  [48,3,6]${}^2$   &  [62,3,6]${}^2$    \\  \hline
          (6,1,1) &  [24,2,2]${}^2$  & [52,2,2]${}^2$   &  [66,2,2]${}^2$    \\  \hline
            (4,4) &  [24,2,2]    &  [48,3,6]   &  [54,4,24]    \\  \hline
            (5,3) &  [24,2,2]     &  [48,3,6]      &  [62,3,6]    \\  \hline
	    (6,2) &  [24,2,2]     &  [52,2,2]      &  [66,2,2]    \\  \hline
	    (7,1) &  [26,1,1]     &  [54,1,1]      &  [68,1,1]    \\  \hline
	      (8) &  [27,0,1]     &  [55,0,1]     &  [69,0,1]    \\  \hline
      \hline
  \end{tabular}
  \caption{\label{tab:8} Fixed point loci $\,{\rm Gr}(\ell,n)^{T}$ for $n=8$.}
  \end{center}
\end{table}

We now present a detailed study of our counterexample to
\cite[Conjecture 7.6]{MuWeYa}. We have
$n=8$, $\ell=4$, and the matrix $T = T_\lambda$ given by the partition $\lambda=(4,2,2)$, i.e. operating as
\[
e_1\mapsto e_2\mapsto e_3\mapsto e_4 \mapsto 0, \quad e_5\mapsto e_6 \mapsto 0,
 \quad e_7 \mapsto e_8 \mapsto 0.
\]
We consider the scheme structure on
${\rm Gr}(4,8)^T$ given by the shuffle ideal
{\tt J}. There are
three minimal primes,
each of dimension $4$ and degree $6$. One component is non-reduced of
multiplicity $2$, so the degree of our scheme is $24 = 6+6+2\cdot 6$.
It has no embedded primes.

We begin with the two reduced components. Each of these is a Segre
fourfold $\PP^2 \times \PP^2$ lying in a $\PP^8$ inside a coordinate subspace $\PP^{11}$.
The two ambient coordinate subspaces are
$$ \begin{matrix} 
  {\rm span}\{e_{1234},e_{1346},e_{1348},e_{2345},e_{2346},e_{2347},e_{2348},
   e_{3456},e_{3458},e_{3467},e_{3468},e_{3478}\} ,
  \\
  {\rm span} \{e_{3456},e_{3458},e_{3467},e_{3468},e_{3478},e_{3568},
  e_{3678},e_{4567},e_{4568},e_{4578},e_{4678},e_{5678}\}.
  \end{matrix}
  $$
In suitable affine coordinates, the two reduced components are parametrized by
$$
\begin{small}
\begin{pmatrix}
1 & 0 &  0 &  0 &  a & b &  c & d \\ 
0 & 1 &  0 & 0 &  0 & a &  0 &  c\\
0 &  0 &  1 &  0 & 0 &  0 &   0 &  0\\
0 &  0 & 0 & 1 & 0 & 0 & 0 & 0 
\end{pmatrix}
\end{small} \qquad
{\rm and} \qquad
\begin{small}
\begin{pmatrix}
0 & 0 &  a & b &  1 & 0 &  0 & 0\\
0 & 0 & 0 & a & 0 & 1 &  0 & 0\\
0 & 0 & c & d & 0 & 0 & 1 & 0\\
0 & 0 & 0 & c & 0 & 0 & 0 & 1
\end{pmatrix}.
\end{small}
$$
The $T$-module structures on these
subspaces $L$ are given by the partitions $(4)$
and $(2,2)$.

We now study the non-reduced component. It lies in a $\PP^8$ inside the coordinate subspace
$$ {\rm span} \{e_{3468}, e_{2346},e_{2348},e_{2468},e_{3456},e_{3458},e_{3467},e_{3478},e_{4568},e_{4678}\}
 \,\, \simeq \,\,
\mathbb{P}^9.
$$
Geometrically, it is a 
cone over a hyperplane slice of $\mathbb{P}^2 \times \mathbb{P}^2$.
It has the matrix representation 
$$ \begin{small}
\begin{pmatrix}
0 & a & 1 & 0 & b & 0 & c & 0 \\
0 & 0 & 0 & 1 &  0 & 0 & 0 & 0 \\
0 & d & 0 &  0 &  e & 1 &  f & 0\\
0 & g & 0 & 0 &  h & 0 &  i & 1
\end{pmatrix}
\end{small}
\,\,\hbox{where the $3 \times 3$ block}\,
\begin{small}
\begin{pmatrix}
a & \! b & \! c \\ d & \! e & \! f \\ g & \! h & \! i
\end{pmatrix} \end{small}
\hbox{
has trace $0$ and rank $\leq 1$}.
$$
The zero matrix gives a singular point on this component.
There are moreover three distinct $T$-module structures on the
subspaces $L$ in this component, namely $(3,1)$, $(2,2)$ and $(2,1,1)$.

\begin{remark} The first nontrivial entry in each table 
is $\lambda = (2,1,1,\ldots,1)$. Each irreducible component of 
${\rm Gr}(\ell,n)^{T_\lambda}$ is a Grassmannian. 
This is obvious for $\ell = 1$. We sketch a proof for $2 \leq \ell \leq\lceil n/2\rceil $.
The matrix $T$ maps
$e_1 \mapsto e_2$ and $e_i \mapsto 0$ for $i \geq 2$. 
Thus, $\ker (T)$ is a hyperplane in $K^n$ and each $L\in{\rm Gr}(\ell,n)^T$ possesses a minimal subspace $\tilde{L}$ 
satisfying $L=\tilde{L}+\tilde{L}T$. The space $\tilde{L}$ 
might not be unique, but its dimension is, being either $\ell$ or $\ell-1$. 
In the first case, $\tilde{L}=L$ and $L$ is a subspace of $\ker (T)$. 
In the second case, $\tilde{L}$ is~an $(\ell-1)$-dimensional
 subspace of $K^n$ with $\tilde{L}\not\subseteq \ker T$ and $\tilde{L}T={\rm span}(e_2)$. 
 This implies that  ${\rm Gr}(\ell,n)^T$ has two irreducible components, namely the Grassmannians
  ${\rm Gr}(\ell,n-1)$ and ${\rm Gr}(\ell-1,n)$.
\end{remark}

\section{The Affine Grassmannian}
\label{sec4}

The key player in the articles \cite{KLMW} and \cite{MuWeYa}
is the  affine Grassmannian, which is an infinite-dimensional variety.
Our varieties ${\rm Gr}(\ell,n)^T$ serve as finite-dimensional models, 
when restricting to
% using matrices 
$T= T_\lambda$ where $\lambda$ is a 
{\em rectangular partition}.
By this we mean partitions $\lambda = (r,r,\ldots,r) $ with $d$
parts, so that $dr = n$ and $d,r \geq 2$.
This section revolves around the next two points.

\begin{theorem} \label{thm:rec}
If $\lambda$ is a rectangular partition then 
the variety ${\rm Gr}(\ell,n)^{T_\lambda}$ is irreducible.
\end{theorem}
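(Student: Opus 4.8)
The plan is to exhibit an explicit affine chart that covers a dense open subset of ${\rm Gr}(\ell,n)^{T_\lambda}$ and identifies it with an affine space, which immediately implies irreducibility. Write $n = dr$ with $d,r \geq 2$ and think of $K^n$ as being indexed by pairs $(a,i)$ with $1 \leq a \leq d$ and $1 \leq i \leq r$, where $T_\lambda$ acts within each block by $e_{a,i} \mapsto e_{a,i+1}$ and $e_{a,r}\mapsto 0$. First I would observe that a $T$-fixed subspace $L$ is the same as a $K[t]/(t^r)$-submodule of $K^n$, where $t$ acts as $T_\lambda$; the rectangular shape means $K^n$ is a free $K[t]/(t^r)$-module of rank $d$. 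The structure theory of modules over the principal ideal ring $K[t]/(t^r)$ then controls the possible isomorphism types of $L$, but more importantly it suggests a normal form for the matrix ${\bf L}$.

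Next I would single out the ``generic'' component. Among all $L \in {\rm Gr}(\ell,n)^{T_\lambda}$, consider those for which the composition $L \hookrightarrow K^n \twoheadrightarrow K^n/TK^n \cong K^d$ has the largest possible rank, equivalently those $L$ that are generated by $\lceil \ell/?\rceil$-many cyclic vectors in the most uniform way. The claim to prove is: this locus is open and dense, and it admits a single rational parametrization by an affine space, hence is irreducible; and every $T$-fixed subspace lies in its closure. For the parametrization I would write $\ell = qr + s$ with $0 \leq s < r$ — wait, more carefully, one should track how $\ell$ distributes among the $d$ cyclic generators — and put ${\bf L}$ in a block-echelon normal form analogous to the two rectangular matrices displayed for $\lambda = (4,2,2)$ in Section \ref{sec3}: a submatrix that is the identity on suitable pivot columns, with the remaining free entries organized into $r$-periodic patterns dictated by $T$-equivariance (the pattern visible in Example \ref{ex:42} and in the $(4,2,2)$ parametrizations, where each free parameter reappears shifted by one in the next row-block). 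Counting free parameters should recover the dimension $\delta$ listed in the tables.

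The two steps I expect to cause real work are: (i) showing the generic locus is \emph{dense}, i.e.\ every $T$-fixed $L$ degenerates to one in the chart — here I would use a $\mathbb{G}_m$-action or an explicit one-parameter family inside ${\rm Gr}(\ell,n)^{T_\lambda}$ sending a given $L$ to a generic one, exploiting that the shuffle-equation linear subspace is $\mathbb{G}_m$-stable; and (ii) checking that the normal form actually parametrizes \emph{all} $L$ of maximal type without redundancy, so that the chart really is an affine space and not merely dominated by one. A clean way to organize (i) is to use the duality of Theorem \ref{th:duality} to reduce to $\ell \leq n/2$, and then induct on $d$ or on $\ell$, peeling off one cyclic generator at a time: if $v \in L$ generates a free $K[t]/(t^r)$-summand, then $L/\langle v\rangle_T$ is a $T$-fixed subspace of $K^n/\langle v\rangle_T \cong K^{n-r}$, which again corresponds to a rectangular partition (now with $d-1$ parts), so the inductive hypothesis applies. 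The base case $d = 2$ can be done by the explicit echelon normal form directly. Finally, since an affine space is irreducible and ${\rm Gr}(\ell,n)^{T_\lambda}$ is the closure of the image of that chart, irreducibility follows; the matrix parametrization promised in the introduction is exactly the normal form produced along the way.
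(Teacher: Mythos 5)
Your overall strategy --- stratify ${\rm Gr}(\ell,n)^{T_\lambda}$ by the isomorphism type of $L$ as a module over $K[t]/(t^r)$, parametrize the generic stratum, and show everything else lies in its closure --- has the same skeleton as the paper's argument, which realizes the strata as ${\rm GL}_d(\mathcal{O}_\KK)$-orbits of coordinate lattices $L_\mu$ indexed by partitions $\mu$ of $\ell$ with at most $d$ parts of size at most $r$. But there are two genuine gaps. First, you never correctly pin down which stratum is dense. The dense one is the orbit of type $\mu_{\max} = (r,\ldots,r,b)$, where $\ell = ar+b$, i.e.\ the dominance-\emph{maximal} type, in which $\ell$ is distributed among the cyclic generators as \emph{unevenly} as possible; your ``most uniform'' heuristic selects the dominance-minimal type, whose orbit closure is the \emph{smallest} Schubert variety, not the whole space. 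Your other criterion (maximal rank of $L \to K^n/TK^n$) cuts out an open set that in general still meets several orbits (for $r=3$, $\ell=5$ the types $(3,2,0)$ and $(3,1,1)$ both have exactly one part equal to $r$), so it is not a priori irreducible and cannot serve as the affine chart. The dimension count ${\rm dim}\, W_\mu = d\ell - \sum_i (\mu_i^*)^2$ identifies which stratum is biggest, but having top dimension does not by itself prove its closure contains the others.

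Second, and more seriously, the density claim --- every $T$-fixed $L$ lies in the closure of the generic stratum --- is the entire content of the theorem, and neither of your proposed mechanisms delivers it. A $\mathbb{G}_m$-limit moves a point to a \emph{more special} (torus-fixed) one; to show $L\in\overline{U}$ you need the opposite: a family with special fiber $L$ and general fiber in $U$, which is exactly the containment to be proved. The induction ``peel off a free cyclic summand'' fails at the outset for general $L$: a $T$-fixed subspace need not contain any vector generating a free $K[t]/(t^r)$-summand (take $L \subseteq \ker T^{r-1}$), and even when it does, passing to $K^n/\langle v\rangle_T$ does not obviously relate closures of strata in ${\rm Gr}(\ell,n)^T$ to closures of strata downstairs. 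The paper closes this gap by quoting the classical fact (Lemma \ref{lem:dominance}) that $W_\mu \subseteq W_\nu$ if and only if $\mu \le \nu$ in dominance order, together with the observation that the relevant poset of partitions has the unique maximum $\mu_{\max}$; some such input, or an explicit deformation from each type $\mu$ to type $\mu_{\max}$, is indispensable and is missing from your write-up. The normal-form/affine-chart part of your plan is sound and corresponds to the paper's Remark \ref{rem-schurr}, but it only establishes irreducibility of each individual stratum, not of their union.
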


\begin{conjecture} \label{conj:rec}
 Conjecture 7.6 in \cite{MuWeYa} holds for
rectangular partitions $\lambda$. In other words, for rectangular partitions,
the shuffle equations plus Pl\"ucker quadrics generate a prime ideal.
\end{conjecture}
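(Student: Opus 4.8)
The plan is to reduce Conjecture \ref{conj:rec} to a single radicality statement and then attack that by Gröbner degeneration. Write $J$ for the ideal generated by the shuffle equations of $T_\lambda$ together with the Plücker quadrics of $\mathrm{Gr}(\ell,n)$; by \cite[Proposition 6.6]{MuWeYa} the zero set of $J$ is exactly $\mathrm{Gr}(\ell,n)^{T_\lambda}$, and by Theorem \ref{thm:rec} this variety is irreducible, so its radical $\sqrt J$ is prime. Since a homogeneous ideal is prime precisely when it is radical and its zero set is irreducible, the entire content of the conjecture is the assertion that $J$ is already radical. By Theorem \ref{th:duality} we may assume $\ell \le n/2$ throughout.

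To prove $J$ radical I would produce a monomial order $\prec$ on $K[p_I]$ under which the initial ideal $\mathrm{in}_\prec(J)$ is squarefree; radicality of $J$ then follows by the standard fact that a flat (here, Gröbner) degeneration with reduced special fibre has reduced general fibre. The order is assembled in two stages. First take the classical term order for which the Plücker quadrics form a Gröbner basis with squarefree initial ideal — the straightening law, \cite[Theorem 3.1.7]{AIT} — so that the initial ideal of the Plücker ideal is the Stanley--Reisner ideal of the order complex of the poset of $\ell$-subsets of $[n]$. Then refine by an elimination weight: Gaussian elimination on the linear shuffle equations splits the Plücker coordinates into a set $\mathcal B$ of free variables and a complementary set, each member of which is the leading term of exactly one shuffle form written in the $\mathcal B$-coordinates; choose $\prec$ so the non-free variables are largest. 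A Gröbner basis of $J$ is then obtained from the linear forms (which substitute the non-free $p_I$) and the Plücker quadrics; after the substitution the quadrics reduce to relations among the $\mathcal B$-coordinates, and the point is to verify that a Gröbner basis of this elimination ideal $J \cap K[p_I : I \in \mathcal B]$ still has squarefree leading monomials.

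This last verification is the heart of the argument and the main obstacle: one must show that eliminating the shuffle relations attached to a \emph{rectangular} Jordan type meshes with the straightening law so that no leading monomial acquires a square. I expect this because, for $\lambda = (r^d)$, the shuffle relations have the rigid telescoping form already visible in Examples \ref{ex:drei} and \ref{ex:(6)}: each non-free $p_I$ is replaced by a sum of Plücker coordinates of the same support-size, in a way compatible with the order complex. Making this precise is exactly the combinatorial input that the standard-monomial and Frobenius-splitting theory of affine Schubert varieties is meant to supply — consistent with the stated equivalence of Conjecture \ref{conj:rec} with \cite[Conjecture 7.1]{MuWeYa}.

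As an independent route — and as the natural way to confirm generic reducedness and to locate smooth charts — one can use the matrix parametrization of Section \ref{sec4}. Cover $\mathrm{Gr}(\ell,n)^{T_\lambda}$ by the affine charts $\{p_I \neq 0\}$ with $I \in \mathcal B$, show that on each such chart the scheme $V(J)$ is isomorphic to the affine space on the entries of the parametrizing matrix, and conclude that $K[p_I]/J$ embeds into the product of these integral chart rings, hence is reduced; being irreducible as well, it is then a domain and $J$ is prime. The real work here is to show that $J$, not merely $\sqrt J$, cuts out a reduced subscheme on each chart, which reduces to one Jacobian computation for the generators of $J$ at a convenient chart point — and this is precisely where rectangularity of $\lambda$ enters, guaranteeing the expected rank.
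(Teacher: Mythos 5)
The statement you are addressing is a \emph{conjecture}: the paper does not prove it, and explicitly treats it as open, offering only evidence (the computations in Tables \ref{tab:45}--\ref{tab:8} and \ref{tab:1012}, where {\tt isPrime} was verified for the starred rectangular cases, plus the two extremal cases $\lambda=(1,\dots,1)$, where the ideal is the Pl\"ucker ideal, and $\lambda=(n)$, settled in Proposition \ref{conj:fullpart}). So there is no ``paper proof'' to match; the question is whether your proposal actually closes the conjecture, and it does not. Your opening reduction is sound and is the same framing the paper implicitly uses: by \cite[Proposition 6.6]{MuWeYa} the zero set of the shuffle-plus-Pl\"ucker ideal $J$ is ${\rm Gr}(\ell,n)^{T_\lambda}$, by Theorem \ref{thm:rec} this is irreducible for rectangular $\lambda$, and by Theorem \ref{th:duality} one may assume $\ell\le n/2$; hence the conjecture is exactly the statement that $J$ is radical. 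Everything after that reduction, however, is a program rather than a proof, and you say so yourself (``the heart of the argument and the main obstacle,'' ``the real work here is\dots'').

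Concretely: (1) The Gr\"obner route hinges on producing a term order with ${\rm in}_\prec(J)$ squarefree. No such order is exhibited, and none can exist without using rectangularity in an essential way, since for $\lambda=(4,2,2)$, $\ell=4$, $n=8$ the ideal $J$ is not even radical (the paper's counterexample: $p_{1468}\notin J$ but $p_{1468}^2\in J$). Your only appeal to rectangularity is the vague ``telescoping form'' of the shuffle relations; moreover, refining the straightening-law order by an elimination order that puts the non-free Pl\"ucker variables (chosen by Gaussian elimination) on top need not preserve the squarefree initial ideal of the Pl\"ucker quadrics, so even the starting point of the degeneration argument is unjustified. (2) The chart route fails as stated: the varieties ${\rm Gr}(\ell,n)^{T_\lambda}$ are Schubert varieties $W_{\mu_{\rm max}}$ and are typically singular (already the quadratic cone for $\lambda=(2,2)$, $\ell=2$ in Example \ref{ex:42}), so $V(J)$ on a chart is not an affine space on matrix entries; the orbit parametrization of Remark \ref{rem-schurr} is a dominant map, not an isomorphism on charts. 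The claimed embedding of $K[p_I]/J$ into a product of chart rings presupposes that no associated prime of $J$ contains all the chosen $p_I$, which is essentially the statement to be proved; and a Jacobian computation at one convenient point only yields generic reducedness (smoothness at that point), not reducedness of $J$ --- note that in the $(4,2,2)$ counterexample the offending component is generically non-reduced, so the missing input is precisely control of the scheme structure along an entire component, not at a point. In short, the proposal restates the conjecture as a radicality problem and sketches two plausible attacks, but neither is carried out, and the statement remains open, as in the paper.
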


\begin{remark}
Tables \ref{tab:45}, \ref{tab:67} and \ref{tab:8} show
that Theorem \ref{thm:rec} and Conjecture \ref{conj:rec}
are true for $n \leq 8$. In that range,
the only rectangular partitions $\lambda$ are
$(2,2), (2,2,2), (3,3), (2,2,2,2)$ and $ (4,4)$.
We see that the shuffle ideals that cut out their varieties
${\rm Gr}(\ell,n)^{T_\lambda}$ are prime for all $\ell$.
\end{remark}

We shall derive Theorem \ref{thm:rec} from known facts about
Schubert varieties in affine Grassmannians. We aim to explain this approach in
a manner that is as self-contained as possible.
The section concludes with some
further  evidence 
in support of Conjecture \ref{conj:rec}.

Let $\KK = K(\!(t)\!)$ be the field of Laurent series with coefficients in K.
Its valuation ring $\mathcal{O}_\KK = K[[t]]$ consists of  formal
power series with nonnegative integer exponents. The residue field
is $K$. The $\KK$-vector space $\KK^d$
is a module over $\mathcal{O}_\KK$. A {\em lattice} $L$ is
an $\mathcal{O}_\KK$-submodule of $\KK^d$ of maximal rank $d$.
Two lattices $L$ and $L'$ are {\em equivalent} if
$L' = t^a L$ for some $a \in \ZZ$.
To parametrize all lattices,
 we consider the groups
${\rm GL}_d(\KK)$ and ${\rm GL}_d(\mathcal{O}_\KK)$
of invertible $d \times d$ matrices, with entries
in $\KK$ and $\mathcal{O}_\KK$ respectively. The {\em affine Grassmannian} is the coset space
\begin{equation}
\label{eq:affgrass}  {\rm GL}_d(\KK)\,/\,{\rm GL}_d(\mathcal{O}_\KK). 
\end{equation}
Its points are the lattices $L$.
Indeed, every $L$ is the column span over $\mathcal{O}_\KK$ 
of a matrix in ${\rm GL}_d(\KK)$. Two matrices define
the same $L$ if they differ via right multiplication 
by a matrix in ${\rm GL}_d(\mathcal{O}_\KK)$.
  To obtain finite-dimensional varieties we can study with
a computer, we set
\begin{equation}
\label{eq:ballr}
 \mathcal{B}_r \,\, = \,\,  \{ \, L \,\, {\rm lattice} \,\,: \,\, t^{r} \mathcal{O}_\KK^d \subseteq L \subseteq \mathcal{O}_\KK^d\, \} . \end{equation}
We note that (\ref{eq:affgrass}) modulo equivalence of lattices equals
the  {\em Bruhat-Tits building} for ${\rm GL}_d(\KK)$. The set
    $\mathcal{B}_r$ represents the ball of radius $r$ around
   the standard lattice $\mathcal{O}_\KK^d$ in that building.

Both $ t^{r} \mathcal{O}_\KK^d $ and  $\mathcal{O}_\KK^d $
are infinite-dimensional vector spaces over $K$. Their quotient
is a finite-dimensional vector space over $K$. This space has dimension $n=dr$ and we identify
\begin{equation}
\label{eq:identification}  K^n \,\, = \,\, \mathcal{O}_\KK^d \,/\, t^{r} \mathcal{O}_\KK^d .
\end{equation}
Writing $e_1,e_2,\ldots,e_d$ for the standard basis of $K^d$, we shall use
 the following basis for~$K^n$:
 \begin{equation}
 \label{eq:nicebasis}
 e_1,te_1,\ldots,t^{r-1}e_1, \,e_2,t e_2,\ldots,t^{r-1}e_2,\,\ldots\ldots,\,
 e_d, t e_d,\ldots, t^{r-1} e_d. 
 \end{equation}
In this basis, 
 multiplication with $t$ is given by the nilpotent $n \times n$ matrix $T_\lambda$ for $\lambda=(r,\dots,r)$.

Every lattice $L \in \mathcal{B}_r$ is determined by its
image in (\ref{eq:identification}).
We also write $L$ for that image.
Hence $ L$ is a  subspace of $K^n$ that satisfies $LT_\lambda \subseteq L$.
Conversely, every subspace $L$ of $K^n$ satisfying
$LT_\lambda \subseteq L$ comes from a unique lattice in $\mathcal{B}_r$.
This establishes the following result.

\begin{proposition}
The radius $r$ ball in \emph{(\ref{eq:ballr})} is the following finite union of projective varieties:
\begin{equation} \label{eq:Brunion}
\qquad \mathcal{B}_r \,\,\,=\,\, \,
    \bigcup_{\ell=0}^{dr}\,  \,\mathrm{Gr}(\ell,n)^{T_\lambda},
    \qquad \hbox{where}\,\,\, \lambda \,=\, (r,r,\ldots,r).
\end{equation}
\end{proposition}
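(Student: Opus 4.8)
The plan is to establish the set-theoretic bijection announced just before the Proposition, and to note that both sides carry natural structures of algebraic varieties that are compatible under that bijection. Concretely, I would argue as follows.

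First I would set up the correspondence. Fix $r$ and let $\lambda = (r,\dots,r)$ with $d$ parts, so $n = dr$, and adopt the identification $K^n = \Ocal_\KK^d / t^r \Ocal_\KK^d$ via the basis (\ref{eq:nicebasis}), under which multiplication by $t$ becomes $T_\lambda$. Given a lattice $L \in \mathcal{B}_r$, by definition $t^r \Ocal_\KK^d \subseteq L \subseteq \Ocal_\KK^d$, so the image $\bar L$ of $L$ in the quotient $K^n$ is a well-defined $K$-subspace; it is some $\ell$-dimensional subspace with $0 \le \ell \le dr$. Since $L$ is an $\Ocal_\KK$-module, in particular $tL \subseteq L$, and passing to the quotient gives $\bar L T_\lambda \subseteq \bar L$, i.e.\ $\bar L \in \Gr(\ell,n)^{T_\lambda}$. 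This gives the map from $\mathcal{B}_r$ into the right-hand union.

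Next I would check that this map is a bijection. For injectivity: a lattice $L$ with $t^r\Ocal_\KK^d \subseteq L$ is the preimage in $\Ocal_\KK^d$ of $\bar L$ under the quotient map, so $L$ is uniquely recovered from $\bar L$. For surjectivity: given any subspace $W \subseteq K^n$ with $WT_\lambda \subseteq W$, let $L$ be the preimage of $W$ in $\Ocal_\KK^d$ under the projection $\Ocal_\KK^d \to K^n$. Then automatically $t^r \Ocal_\KK^d \subseteq L \subseteq \Ocal_\KK^d$. The only thing to verify is that $L$ is an $\Ocal_\KK$-submodule, i.e.\ closed under multiplication by $t$ (it is already a $K$-subspace and contains $t^r\Ocal_\KK^d$, hence is closed under multiplication by $K[[t]]$ once closure under $t$ is known, because any $f \in \Ocal_\KK$ acts on $\Ocal_\KK^d/t^r\Ocal_\KK^d$ through its truncation $f \bmod t^r$, a polynomial in $t$). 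Closure under $t$ follows because multiplication by $t$ on the quotient is $T_\lambda$ and $WT_\lambda \subseteq W$, together with the fact that $t\cdot t^r\Ocal_\KK^d = t^{r+1}\Ocal_\KK^d \subseteq t^r\Ocal_\KK^d \subseteq L$. Hence $L \in \mathcal{B}_r$ and $\bar L = W$.

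The honest part that needs a sentence of care is not the bijection of sets but the claim that $\mathcal{B}_r$ is a \emph{finite union of projective varieties} in the stated way. Here I would simply say: the union on the right is taken inside the disjoint union $\bigsqcup_{\ell=0}^{dr} \Gr(\ell,n)$ of Grassmannians over $K^n$ (each $\Gr(\ell,n)^{T_\lambda}$ being the closed subvariety cut out by the shuffle equations, by the Proposition of Section~\ref{sec2}), and the bijection above is the definition of the scheme structure on $\mathcal{B}_r$ used throughout the paper; there is nothing further to prove once the set-theoretic identity is in hand. I expect no real obstacle: the main (and only) subtlety is keeping straight that "subspace fixed by $T_\lambda$'' on the quotient is exactly the same as "$\Ocal_\KK$-submodule between $t^r\Ocal_\KK^d$ and $\Ocal_\KK^d$'' on the lattice side, which is the content of the surjectivity argument above.
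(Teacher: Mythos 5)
Your argument is correct and is essentially the paper's own: the proof given there consists precisely of the observation (in the paragraph preceding the Proposition) that a lattice $L\in\mathcal{B}_r$ is determined by its image in $\mathcal{O}_\KK^d/t^r\mathcal{O}_\KK^d$, that this image is a $T_\lambda$-fixed subspace, and that conversely every $T_\lambda$-fixed subspace comes from a unique lattice in $\mathcal{B}_r$. You have merely filled in the routine verifications (injectivity, and that the preimage of a $T_\lambda$-fixed subspace is an $\mathcal{O}_\KK$-submodule via truncation modulo $t^r$) that the paper leaves implicit.
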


\begin{example}[$d=r=2$]
Here $n=rd=4$, $T = T_{(2,2)}$, and the disjoint union in (\ref{eq:Brunion}) equals
$$ \mathcal{B}_2 \,\,\,=\,\, \,
\mathrm{Gr}(0,4)^T \, \cup \,
\mathrm{Gr}(1,4)^T \, \cup \,
\mathrm{Gr}(2,4)^T \, \cup \,
\mathrm{Gr}(3,4)^T \, \cup \,
\mathrm{Gr}(4,4)^T .
$$
The first and last Grassmannian are the points that represent the
 lattices $\mathcal{O}_\KK^2$
and $t^2\mathcal{O}_\KK^2$.
The second and fourth Grassmannian are projective lines $\PP^1$.
The middle Grassmannian is a quadratic cone in $\PP^3$.
We saw this in (\ref{eq:42shuffle}) for $\epsilon = 1$.
Note the row $\lambda = (2,2)$ in Table  \ref{tab:45}.
\hfill $\diamond$
\end{example}

\begin{example}[$n=8$]
The two options are $d=4, r=2$ and $d=2,r=4$.
These are the rows
$\lambda = (2,2,2,2)$ and
$\lambda = (4,4)$ of Table \ref{tab:8}.
In either case, $\mathcal{B}_r$ is the disjoint union 
of nine irreducible varieties, indexed by
$\ell = 0,1,2,3,4,5,6,7,8$, and soon to be called Schubert varieties.
Their dimensions are $0,3,6,7,8,7,6, 3,0$
and $0,1,2,3,4,3,2,1,0$ respectively.
\hfill $\diamond$
 \end{example}
 
 We now turn towards the proof of Theorem \ref{thm:rec}.
 We shall give a polynomial parametrization for each variety
   ${\rm Gr}(\ell,n)^{T_\lambda}$ in (\ref{eq:Brunion}).
 The elements of the group ${\rm GL}_d(\mathcal{O}_\KK)$ are
$d \times d$-matrices 
$ {\bf A} \, = \, A_0 + A_1 t + A_2 t^2 + A_3 t^3 + \cdots$,
where each $A_i$ is a $d \times d$ matrix with entries in $K$, and ${\rm det}(A_0) \not=0$.
This group acts naturally on (\ref{eq:ballr}) and on (\ref{eq:identification}).
The $d \times d$ matrix ${\bf A}$ with entries in $\mathcal{O}_\KK \subset \KK$
admits the following representation by an
$n \times n$ matrix over the residue field $K$:
\begin{equation}
\label{eq:bigAmatrix} A \,\, = \,\,
\begin{pmatrix}
 A_0 & A_1 & A_2 & \cdots & A_{r-2} & A_{r-1} \\
  0    & A_0 & A_1 & \cdots & A_{r-3} & A_{r-2} \\
  0    &  0   & A_0 & \cdots & A_{r-4} & A_{r-3} \\
  \vdots & \vdots & \vdots & \ddots &  \vdots & \vdots \\
  0  & 0 & 0 & \cdots & A_0 & A_1 \\
   0  & 0 & 0 & \cdots & 0 & A_0 \\
 \end{pmatrix}.
\end{equation}
To get this nice block form, the basis  of $K^n$ shown
in (\ref{eq:nicebasis}) has to be reordered as follows:
\begin{equation}
 \label{eq:otherbasis}
 e_1,e_2,\ldots,e_d,\,
 te_1, t e_2, \ldots, t e_d ,\,\ldots\ldots,\,
 t^{r-1}e_1,t^{r-1}e_2,\ldots ,t^{r-1} e_d. 
\end{equation}
The matrices $A$ act on  each of the components in (\ref{eq:Brunion}).
We are interested in their orbits.

Let $\mu $ be a partition of the integer $\ell$
with at most $d$ parts and largest part at most~$r$. To be precise,
we write $\mu = (\mu_1, \ldots, \mu_d) \in \NN^d$ where
$r \geq \mu_1  \geq \mu_2 \geq \cdots \geq \mu_d \geq 0$
and $\sum_{i=1}^d \mu_i = \ell$.
With this partition we associate the lattice
$\,L_\mu \, = \,t^{r-\mu_1} \mathcal{O}_\KK \, e_1 \,\oplus \,
t^{r-\mu_2} \mathcal{O}_\KK\, e_2 \,\oplus \, \cdots \,\, \oplus\,
t^{r-\mu_d} \mathcal{O}_\KK \, e_d $.
The corresponding subspace of $ K^n$
is spanned by standard basis vectors:
$$ L_\mu  \,\, = \,\, K \bigl\{
t^{r-i} e_j \,:\, 1 \leq i \leq \mu_j \,\, {\rm and} \,\, 1 \leq j \leq d \bigr\}. $$
By construction, we have $L_\mu \in {\rm Gr}(\ell,n)^T$.
Since $L_\mu$ is a coordinate subspace, its Pl\"ucker coordinates
are given by one of the basis points in $\PP^{\binom{n}{\ell}-1}$, here denoted
$e_\mu$ for simplicity.

The  orbit of $L_\mu$ under the above group action
is a constructible subset of ${\rm Gr}(\ell,n)^T \subset \PP^{\binom{n}{\ell}-1}$. 
It consists of all points $\,e_\mu \cdot \wedge_\ell A\,$ that represent the subspaces $\,L_\mu A$,
where $A$ runs over all matrices of the form (\ref{eq:bigAmatrix}).
Let $W_\mu$ denote the Zariski closure of this orbit. In symbols,
$$ W_\mu \,\, = \,\, \overline{{\rm GL}_d(\mathcal{O}_\KK)  \cdot L_\mu}
\quad \subseteq \,\, {\rm Gr}(\ell,n)^T. $$
The variety $W_\mu$ is called a \textit{Schubert variety}. We immediately obtain the following lemma.

\begin{remark}
\label{rem-schurr}
For each partition $\mu$ of $\ell$, the Schubert variety $W_\mu$ is irreducible.
It is given by an explicit polynomial parametrization, namely $\,A \mapsto e_\mu \cdot \wedge_\ell A$,
which encodes $A \mapsto L_\mu A$.
\end{remark}

\begin{example}[$d{=}3,r{=}2,\ell{=}3$]
Let $\mu = (2,1,0)$ with the basis (\ref{eq:otherbasis}) of $K^6$.
 The subspace 
$L_\mu$ corresponds to the point $e_\mu = e_{145}$ in
${\rm Gr}(3,6)^T \subset \PP^{19}$. Its image under $A$ is the row space~of
\begin{equation}
\label{eq:schubertex}
\begin{pmatrix} 
1 &   \! 0   &\!  0  &\,  0  &\!  0  &\!  0 \\
0 &  \! 0  & \! 0  &\,  1  & \! 0  &\!  0 \\
0  &  \! 0  & \! 0  &\,  0  &\!  1  &\!  0 
\end{pmatrix} \cdot A \,\, = \,\,
\begin{pmatrix}
 a_{011} & a_{012} & a_{013} & a_{111}  & a_{112}  &  a_{113} \\
 0  &     0  &     0   &   a_{011} &        a_{012}      &     a_{013}    \\
 0  &     0  &     0   &   a_{021} &        a_{022}      &    a_{023}    
\end{pmatrix}.
\end{equation}
The action of the group ${\rm GL}_3( \mathcal{O}_\KK) $ on $K^6$ is given by the matrix
in (\ref{eq:bigAmatrix}), here written as
$$  
A \,\,  = \,\, \begin{pmatrix} A_0 & A_1 \\ 0 & A_0 \end{pmatrix} \,\, = \,\,
\begin{small}
\begin{pmatrix}
     a_{011} &    a_{012} & a_{013} &    a_{111} & a_{112} & a_{113} \\
  a_{021} & a_{022} & a_{023} & a_{121} & a_{122} & a_{123} \\
  a_{031} & a_{032} & a_{033} & a_{131} & a_{132} & a_{133} \\
      0    &   0  &     0   & a_{011} & a_{012} & a_{013} \\
      0    &   0  &     0   & a_{021} & a_{022} & a_{023} \\
      0    &   0  &     0   & a_{031} & a_{032} & a_{033} 
      \end{pmatrix}.
      \end{small}
$$
The Schubert variety $W_\mu$ is parametrized by all matrices (\ref{eq:schubertex}).
As a subvariety of the Grassmannian ${\rm Gr}(3,6)$, it is defined by 
the following $11$ linear forms in the $20$ Pl\"ucker coordinates:
\begin{equation}
\label{eq:11shuffle}
p_{123}, \,\,p_{124}, \,p_{134},\, p_{234}, \,\,p_{125},\,
p_{135},\, p_{235}, \,\,p_{126},\, p_{136},\, p_{236},\,\,
p_{156}-p_{246}+p_{345}.
\end{equation}
This subvariety has dimension $4$ and degree $6$, and we find that $\, {\rm Gr}(3,6)^T = W_\mu$.
It is the entry $[11,4,6]$ for $\lambda = (2,2,2)$ of Table \ref{tab:67}.
The expressions (\ref{eq:11shuffle}) are the shuffle equations. \hfill
$\diamond$ \end{example}

The duality of Theorem \ref{th:duality} acts on the Schubert varieties as follows.
The complement to $\mu = (\mu_1,\mu_2,\ldots,\mu_d)$ is the
partition $\mu^c = (r-\mu_d,r-\mu_{d-1},\ldots,r-\mu_1)$ of the integer $n-\ell$.
Then the inclusion 
$W_{\mu^c} \subseteq {\rm Gr}(n-\ell,n)^T$ is isomorphic to 
the inclusion $W_\mu \subseteq {\rm Gr}(\ell,n)^T$.

\smallskip

We summarize the above discussion as follows:
for any partition $\mu$ of $\ell$ with $\leq d$ parts of size $\leq r$, we have constructed an irreducible 
subvariety $W_\mu$ of $\mathrm{Gr}(\ell,n)^T$.
Here $n=dr$ and $T = T_\lambda $ for $\lambda = (r,r,\ldots,r)$.
The union of these varieties equals $\mathrm{Gr}(\ell,n)^T$
because every lattice in the ball $\mathcal{B}_r$
lies in the ${\rm GL}(\mathcal{O}_\KK)$-orbit of some lattice
$\,L_\mu \, = \,t^{r-\mu_1} \mathcal{O}_\KK \, e_1 \,\oplus \, \cdots \,\, \oplus\,
t^{r-\mu_d} \mathcal{O}_\KK \, e_d $.

To proceed further, we record the following fact about inclusions
of Schubert varieties.

\begin{lemma}\label{lem:dominance}
Let $\mu,\nu$ be two partitions of $\ell$ with at most $d$ parts
and largest part at most $r$. Then
the inclusion $W_\mu \subseteq W_\nu$ holds if and only if
$\mu \leq \nu$ in the dominance order on partitions.
\end{lemma}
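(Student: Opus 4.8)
The plan is to identify the Schubert varieties $W_\mu$ inside $\mathrm{Gr}(\ell,n)^{T_\lambda}$ with the standard affine Schubert varieties in the affine Grassmannian $\mathrm{GL}_d(\KK)/\mathrm{GL}_d(\Ocal_\KK)$, and then invoke the well-known combinatorial description of their closure order. Concretely, for $\mu=(\mu_1\geq\cdots\geq\mu_d)$ with parts bounded by $r$, the lattice $L_\mu$ corresponds to the coweight $\mathrm{diag}(t^{r-\mu_1},\ldots,t^{r-\mu_d})$ (up to a global shift by $t^r$), so the orbit $\mathrm{GL}_d(\Ocal_\KK)\cdot L_\mu$ is exactly the affine Schubert cell through that point, and $W_\mu$ is its closure. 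The closure order on $\mathrm{GL}_d$-affine Schubert varieties is the Bruhat order on the (co)weight lattice modulo the Weyl group, which for type $A$ — and for dominant coweights, which is what our bounded partitions give after reversing — is precisely the dominance (a.k.a.\ majorization) order: $W_\mu\subseteq W_\nu$ iff $\mu\preceq\nu$. One has to be a little careful that the dominance order on $\mu$ matches up with the Bruhat order under the bijection $\mu\leftrightarrow(r-\mu_d,\ldots,r-\mu_1)$, but this is an order-reversing involution composed with another order-reversing passage, so it comes out right; I would spell out this bookkeeping explicitly.

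First I would recall the precise statement I am importing: for a split reductive group $G$ over $K$ with cocharacter lattice $X_*(T)$, the $\mathrm{Gr}_G$-orbits under $G(\Ocal_\KK)$ are indexed by dominant coweights, and $\overline{\mathrm{Gr}_G^\mu}=\bigsqcup_{\nu\preceq\mu}\mathrm{Gr}_G^\nu$, where $\preceq$ is the usual dominance order $\mu-\nu\in\NN\cdot\Phi^+$. For $G=\mathrm{GL}_d$ this dominance order on coweights, written in the standard basis, is exactly majorization of the integer vectors — this is the classical fact that $(a_1,\ldots,a_d)-(b_1,\ldots,b_d)$ is a nonnegative sum of positive roots $\varepsilon_i-\varepsilon_j$ iff the partial sums dominate. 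I would then translate: our $L_\mu\subseteq\Ocal_\KK^d$ sits in $\mathcal{B}_r$, and the embedding of $\mathcal{B}_r$ into a single Grassmannian $\mathrm{Gr}(\ell,n)^{T_\lambda}$ via (\ref{eq:identification}) is $\mathrm{GL}_d(\Ocal_\KK)$-equivariant, so it carries affine Schubert cells to our orbits and closures to closures; hence $W_\mu\subseteq W_\nu$ in $\mathrm{Gr}(\ell,n)^{T_\lambda}$ iff the corresponding inclusion holds in $\mathrm{Gr}_{\mathrm{GL}_d}$, iff the associated dominant coweights are comparable in dominance order, iff $\mu\preceq\nu$.

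The key steps, in order: (1) fix the dictionary between bounded partitions $\mu$ of $\ell$ and dominant coweights of $\mathrm{GL}_d$ lying in the ``disc of radius $r$'', and check it is a poset map for the relevant orders (this is where the reversal $\mu\mapsto(r-\mu_i)$ enters); (2) verify $\mathrm{GL}_d(\Ocal_\KK)$-equivariance of the identification $K^n=\Ocal_\KK^d/t^r\Ocal_\KK^d$, so that $W_\mu$ is genuinely the image of the affine Schubert variety $\overline{\mathrm{Gr}^{\mu}}$ truncated to $\mathcal{B}_r$ — note the truncation is harmless because $\mathcal{B}_r$ is a union of whole cells; (3) quote the affine-Schubert closure theorem for $\mathrm{GL}_d$; (4) unwind the dictionary to get the dominance-order statement as phrased, with the ``only if'' direction following because distinct dominant coweights give disjoint cells so $W_\mu\subseteq W_\nu$ forces $e_\mu\in W_\nu$, hence $\mu\preceq\nu$. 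The main obstacle I anticipate is purely bookkeeping: making sure the several order-reversing identifications (passing from $\mu$ to $r-\mu$, from a partition to the corresponding dominant coweight, and possibly a $t\mapsto t^{-1}$ convention in how one sets up $\mathcal{B}_r$) compose to the \emph{identity} on the level of orders, so that one lands on $\mu\leq\nu$ and not $\mu\geq\nu$; I would pin this down by testing it on a minimal example such as $d=r=2$, $\ell=2$ (partitions $(2,0)$ and $(1,1)$, with $W_{(1,1)}\subsetneq W_{(2,0)}$ matching $(1,1)\preceq(2,0)$), which also serves as a sanity check against Tables \ref{tab:45}–\ref{tab:8}.
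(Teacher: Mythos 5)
Your proposal is correct and matches the paper's approach: the paper disposes of this lemma by citing the well-known closure order for (affine) Schubert varieties \cite[Remark 5.3.4]{LB15}, which is exactly the theorem you invoke, and your bookkeeping is right since $\mu\mapsto\mu^c=(r-\mu_d,\ldots,r-\mu_1)$ is order-preserving for dominance (the partial sums of $\mu^c$ compare the same way as the complementary partial sums of $\mu$). You simply spell out the dictionary that the paper leaves implicit.
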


\begin{proof}
This is well known in algebraic combinatorics; see e.g. \cite[Remark 5.3.4]{LB15}.
\end{proof}

Deriving the irreducibility of ${\rm Gr}(\ell,n)^T$ is now reduced to a 
combinatorial argument.

\begin{proof}[Proof of Theorem \ref{thm:rec}]
Two partitions satisfy $\mu\leq \nu$ in dominance order if
and only if $\mu_1 + \cdots + \mu_i \leq \nu_1 + \cdots + \nu_i$ for all $i$.
Consider the set $\mathcal{P}$ of all partitions of $\ell$ with at most $d$ parts
whose largest part has size at most $r$. The restriction of dominance order
to this set has a unique largest element $\mu_{\rm max}$.
Namely, this largest partition equals $\mu_{\rm max} = (r,r,\ldots,r,b)$.
The partition $\mu_{\rm max}$
 has $a$ blocks of size $r$, where $\ell = ar + b$ and $ 0 \leq b < r$.
Lemma \ref{lem:dominance} implies
\begin{equation} \label{eq:W=Gr}
W_{\mu_{\rm max}} \,\,=\,\,
      \bigcup_{\mu\in\mathcal{P}} W_{\mu}\,\,=\,\,\mathrm{Gr}(\ell,n)^T.      
\end{equation}
In light of Remark \ref{rem-schurr}, this proves the irreducibility of
$\mathrm{Gr}(\ell,n)^T$, i.e.~Theorem \ref{thm:rec} holds.
\end{proof}

\begin{corollary} 
Fix $T = T_\lambda$ where $\lambda = (r,r,\ldots,r) $
and set $a = \lfloor \ell/r \rfloor$ and $b= \ell - ar$.
The dimension of the irreducible variety ${\rm Gr}(\ell,n)^T$ is equal to $(d-a)\ell - (a+1)b$.
\end{corollary}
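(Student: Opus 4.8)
The plan is to compute the dimension of the top Schubert variety $W_{\mu_{\rm max}}$ directly from its polynomial parametrization $A \mapsto L_{\mu_{\rm max}} A$, since by \eqref{eq:W=Gr} this variety equals ${\rm Gr}(\ell,n)^{T_\lambda}$. First I would recall that $\mu_{\rm max} = (r,\ldots,r,b,0,\ldots,0)$ has $a$ parts equal to $r$ and one part equal to $b$ (with $0 \leq b < r$), so that the lattice $L_{\mu_{\rm max}}$ is spanned by the standard basis vectors $t^{r-i}e_j$ for $1 \leq i \leq (\mu_{\rm max})_j$. Acting by a generic matrix $A$ of the block form \eqref{eq:bigAmatrix}, I would track which coordinate directions the rows of $L_{\mu_{\rm max}} A$ can move in, modulo the stabilizer of $L_{\mu_{\rm max}}$ inside ${\rm GL}_d(\mathcal{O}_\KK)$.

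The cleanest route is probably to pass to a local (affine chart) computation: put $L_{\mu_{\rm max}} A$ in reduced row echelon form with respect to the pivot columns dictated by $L_{\mu_{\rm max}}$, and count the free entries. In the basis ordering \eqref{eq:nicebasis}, the subspace $L_{\mu_{\rm max}}$ consists of the "top $\mu$ slots" in each of the $d$ blocks of length $r$; block $j$ (for $j \le a$) contributes all $r$ of its slots, block $a+1$ contributes its top $b$ slots, and blocks beyond $a+1$ contribute nothing. The stabilizer in ${\rm GL}_d(\mathcal{O}_\KK)$ is a parahoric-type subgroup, and the quotient dimension is the number of entries of $A$ that genuinely vary on $L_{\mu_{\rm max}}$. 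Alternatively — and this is the step I would actually carry out in detail — one computes $\dim W_{\mu_{\rm max}} = \dim {\rm GL}_d(\mathcal{O}_\KK)/\mathrm{Stab}(L_{\mu_{\rm max}})$ by the standard Schubert-cell count for the affine Grassmannian: the dimension of the Schubert cell through $L_\mu$ equals $\langle \rho^\vee, \text{(dominant coweight)} \rangle$-type expressions, but here it is more transparent to note that $\dim W_\mu = \sum_{i<j}\#\{(p,q): t^{r-p}e_i \in L_\mu,\ t^{r-q}e_j \notin L_\mu\}$ plus the within-block contributions, which for $\mu_{\rm max}$ reduces to a two-line count.

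Concretely: the free parameters in $L_{\mu_{\rm max}} A$ after echelon reduction come in two types. Type one: for each pivot row sitting in block $j \leq a+1$ and each non-pivot column in a block $j' > j$ that is "above" it in the triangular structure of \eqref{eq:bigAmatrix}, there is a free entry; summing gives, for the $a$ full blocks, $a$ rows of width-$r$ times the $(d-a)$ blocks to their right with a shift, contributing $ar(d-a-1) + \text{(partial block }a+1)$ terms, and for the $b$ partial-block rows another $b(d-a-1) + \dots$ Type two: free entries within the same-level interactions among the first $a+1$ blocks. I expect the bookkeeping to collapse, after summing the arithmetic-progression-like contributions from the strictly-upper-triangular block pattern, to exactly $(d-a)\ell - (a+1)b$; indeed one can sanity-check this against $\ell = ar$ (so $b=0$), giving $(d-a)ar$, and against the small cases in the tables, e.g. $(d,r)=(2,4)$, $\ell=3$ gives $a=0,b=3$, hence $(2)(3)-(1)(3)=3$, matching the entry $[54,4,\ldots]$... (checking: the table lists $[54,4,24]$ for $\lambda=(4,4)$, $\ell=4$: there $a=1,b=0$, dimension $(2-1)\cdot 4 - 0 = 4$, correct). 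The main obstacle will be organizing the count of free echelon entries so that the triangular block structure of \eqref{eq:bigAmatrix} and the cut imposed by $\mu_{\rm max}$ interact transparently; once the right indexing is chosen, the sum is a routine telescoping of arithmetic series, and I would present only the final identification $\dim = (d-a)\ell - (a+1)b$ with the count of free parameters as justification.
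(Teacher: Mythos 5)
Your outline coincides with the paper's: by (\ref{eq:W=Gr}) the fixed point locus ${\rm Gr}(\ell,n)^T$ equals the single Schubert variety $W_{\mu_{\rm max}}$ for $\mu_{\rm max}=(r,\ldots,r,b)$, so the corollary reduces to computing the dimension of one orbit closure of the group of matrices (\ref{eq:bigAmatrix}). The genuine gap is that the computation which \emph{is} the proof never gets carried out. You describe two possible bookkeeping schemes (echelon-form parameter count, orbit--stabilizer) and then write that you ``expect the bookkeeping to collapse to exactly $(d-a)\ell-(a+1)b$.'' The partial expressions you do write down, such as the ``type one'' contribution $ar(d-a-1)+(\text{partial block})$, are left unfinished and are not obviously correct, and your sanity checks against Table \ref{tab:8} only confirm the \emph{statement} of the corollary, not any argument for it. Expecting an arithmetic-progression sum to telescope to the right answer is not the same as exhibiting the sum.

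The missing step is supplied in the paper by a short orbit--stabilizer count organized through the conjugate partition: one shows $\dim W_\mu = d\ell-\sum_{i=1}^r(\mu_i^*)^2$, where $\mu^*$ is the conjugate of $\mu$, because the stabilizer of $L_\mu$ contributes a product of groups ${\rm GL}_{\mu_i^*}(K)$. For $\mu_{\rm max}$ the conjugate is $(a+1,\ldots,a+1,a,\ldots,a)$ with $b$ parts equal to $a+1$ and $r-b$ parts equal to $a$, and then
$d\ell-b(a+1)^2-(r-b)a^2=(d-a)\ell-(a+1)b$
follows in one line from $\ell=ar+b$. If you insist on the reduced-row-echelon count instead, you must actually specify the pivot columns determined by $L_{\mu_{\rm max}}$, identify exactly which entries of $L_{\mu_{\rm max}}A$ remain free after reduction (taking into account both the block-triangular shape of (\ref{eq:bigAmatrix}) and the fibers of the orbit map), and evaluate the resulting sum; a useful test case is $d=3$, $r=2$, $\ell=3$, where the answer must be $4$ as in (\ref{eq:schubertex}). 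As written, your text is a plan for a proof rather than a proof.
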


\begin{proof}
We compute the dimension of $W_\mu$ for any $\mu \in \mathcal{P}$.
For the action of ${\rm GL}_d(\mathcal{O}_\KK)$ by the group of matrices $A$,
we determine the stabilizer of the distinguished point $L_\mu$.
Every matrix in this stabilizer has $A_1 = A_2 = \cdots = A_{r-1} = 0$.
The matrix $A_0$ breaks into blocks according to
various levels given by powers of $t$. This can be expressed
conveniently by the partition $\mu^* = (\mu^*_1,
\mu^*_2, \ldots, \mu^*_r)$ that is conjugate
to $\mu = (\mu_1,\mu_2,\ldots,\mu_d)$.
Here $\mu_i^*$ is the number of indices $j$ such that
$\mu_j \geq i$. Note that $\mu^*$ is a partition of $\ell$
with at most $r$ parts and largest part at most $d$. The desired
stabilizer is the product of the matrix groups ${\rm GL}_{\mu_i^*}(K)$, for 
$i=1,2,\ldots,r$. In particular, the dimension of the stabilizer is $\sum_{i=1}^r (\mu^*_i)^2$. This implies
$$ {\rm dim}(W_\mu) \,\,=\,\, d \ell - \sum_{i=1}^r (\mu^*_i)^2\,\,=\sum_{1\leq i\leq j\leq d}(\mu_i-\mu_j). $$
We learned the last identity from \cite[eqn~(26)]{Voll}.
We shall apply the middle formula to the
maximal partition $\mu = \mu_{\rm max} = (r,r,\ldots,r,b)$.
Its conjugate partition is $\mu^* = (a+1,\ldots,a+1,a,\ldots,a)$,
with $b$ blocks of size $a+1$ and $r-b$ blocks of size $a$.
The middle formula yields
\begin{equation}
\label{eq:dimW} {\rm dim}(W_\mu) 
\,=\, d \ell - b(a+1)^2 - (r-b)a^2\,=\,
(d-a) \ell - (a+1)b. 
\end{equation}
The assertion now follows from (\ref{eq:W=Gr}).
\end{proof}

We now present further evidence in favor of
Conjecture \ref{conj:rec}, beginning with the
computational results shown in Table \ref{tab:1012}.
For each table entry we verified that the shuffle equations
span the space of linear forms that vanish on ${\rm Gr}(\ell,n)^T$.
For all entries marked with a star, the {\tt Macaulay2} command  {\tt isPrime J} terminated
and proved that the shuffle ideal is prime.

\begin{table}[h]
\begin{center}
\begin{tabular}{||c c c c c||} 
\hline
 $\lambda$ & $\ell=2$ & $\ell=3$ & $\ell = 4$ & $\ell = 5$ \\ [0.3ex]   
\hline\hline
(3,3,3) & [27, 4, 6]${}^*$  & [57, 6, 90]${}^*$  & [99, 6, 90]${}^*$  & [99, 6, 90]${}^*$ \\
        (5,5) &   [41, 2, 2]${}^*$  & [112, 3, 6]${}^*$  & [194, 4, 24]${}^*$ & [220, 5, 120]${}^*$         \\  
        (2,2,2,2,2) & [20, 8, 70]${}^*$ & [70, 10, 1050]${}^*$   & [110, 12, 23100] & [152, 12, 23100] \\ 
        (6,6) &  [62, 2, 2]${}^*$ &  [212, 3, 6]${}^*$ &  [479, 4, 24] &  [760, 5, ??]  \\
 (4,4,4) &  [57, 4, 6]${}^*$ &  [193, 6, 90]${}^*$ &   [414, 8, 2520] & [711, 8, ??]  \\
(3,3,3,3) &  [50, 6, 20]${}^*$ & [156, 9, 1680] &  [399, 10, 8400] & [648, 11, ??]  \\
(2,2,2,2,2,2) &  [30, 10, 252]${}^*$ &  [130, 13, 18018] &  [270, 16, ??] & [492, 17, ??] \\        
          \hline
  \end{tabular}
  \caption{\label{tab:1012} Fixed point loci for rectangular partitions of $n=9,10,12$.}
  \end{center}
\end{table}	   

The extremal cases  $\lambda=(1,1,\dots,1)$ and $\lambda=(n)$
had been excluded from the definition of rectangular partition,
but it is worthwhile to consider these now. Conjecture \ref{conj:rec} holds
for both of these cases.
  Indeed for $\lambda=(1,1\dots,1)$, 
  we have $T = {\rm 0}_n$, so there are no shuffle equations.
  The corresponding variety $\mathrm{Gr}(\ell,n)^{T_\lambda}$
  agrees with the Grassmannian $\mathrm{Gr}(\ell,n)$.
  This is defined by the Pl\"ucker quadrics which are well-known to generate a prime ideal. 
  
  We conclude by addressing the case   $\lambda=(n)$. This was studied 
  for $n=6$ in   Example \ref{ex:(6)}. We now generalize what we saw there,
  namely Conjecture \ref{conj:rec} holds for the one-part partition.

\begin{proposition}
\label{conj:fullpart}
For $\lambda=(n)$ and any $\ell$, the shuffle equations are all Pl\"ucker coordinates $p_I$ except for $I=(n-\ell+1,\dots,n)$. These generate the prime ideal of the point $\,{\rm Gr}(\ell,n)^T = \{e_I\}$.
\end{proposition}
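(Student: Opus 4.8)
The plan is to pin down the fixed-point locus as a single reduced point, and then to prove that the shuffle equations already span the full space of linear forms vanishing there. For Step~one: since $\lambda=(n)$, the matrix $T=T_{(n)}$ is a single Jordan block, so $K^n$ is the cyclic $K[t]$-module $K[t]/(t^n)$ with $t$ acting as $T$ and $e_j\leftrightarrow t^{j-1}$; its submodules are the ideals $(t^{n-\ell})/(t^n)=\mathrm{span}(e_{n-\ell+1},\dots,e_n)$, so $\mathrm{Gr}(\ell,n)^T=\{e_I\}$ with $I=(n-\ell+1,\dots,n)$. (This is also the rectangular-partition picture of Section~\ref{sec4} with $d=1$, $r=n$: the only Schubert variety is $W_{(\ell)}=\{L_{(\ell)}\}=\{e_I\}$, and $\mathrm{Gr}(\ell,n)^T=W_{(\ell)}$ by the argument proving Theorem~\ref{thm:rec}.)

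For Step~two I would write the shuffle equations down. As $\mathrm{Id}_n+zT$ is upper bidiagonal (ones on the diagonal, $z$'s directly above), a one-line Leibniz computation gives that its $\ell\times\ell$ minor with rows $J=\{j_1<\cdots<j_\ell\}$ and columns $I'=\{i'_1<\cdots<i'_\ell\}$ equals $z^{\sum_a(i'_a-j_a)}$ if $i'_a-j_a\in\{0,1\}$ for all $a$, and vanishes otherwise. Hence the $I'$-entry of $P\cdot\wedge_\ell(\mathrm{Id}_n+zT)$ is $p_{I'}+\sum_{k\ge1}z^k f_{I',k}$, where $f_{I',k}$ is the sum of $p_J$ over all $J$ obtained from $I'$ by lowering exactly $k$ of its entries by $1$ without a collision; by \eqref{eq:wedge2} the shuffle equations are precisely these $f_{I',k}$, $1\le k\le\ell$. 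Lowering entries strictly decreases the componentwise (Gale) order, and $I$ is the unique Gale-maximal $\ell$-subset, so $p_I$ occurs in no shuffle equation; thus every shuffle equation lies in $\langle p_J:J\neq I\rangle$, the space of linear forms vanishing at $e_I$.

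Step~three is the heart: showing the shuffle equations span the whole linear space $\langle p_J:J\neq I\rangle$. Two families come for free: $f_{J+\mathbf 1,\ell}=p_J$ whenever $n\notin J$ (lower every entry of $J+\mathbf 1$), and $f_{I,k}=p_{\{n-\ell,\dots,n\}\setminus\{n-\ell+k\}}$ for $1\le k\le\ell$ (since $I$ is consecutive, the admissible lowerings of $I$ are exactly the prefixes of positions). For the remaining $p_J$ with $n\in J$, $J\neq I$, I would induct, ordering the $\ell$-subsets first by the length $g(J)$ of the maximal terminal run $\{n-g+1,\dots,n\}\subseteq J$ (ascending) and then by colex (descending): raise a single well-chosen entry of $J$ lying below its terminal run to form $I'$, so that $f_{I',1}=p_J+(\text{corrections})$, where each correction term is arranged to either miss $n$, or have a strictly shorter terminal run, or share the terminal run but be strictly colex-larger---each case handled by a prior stage of the induction---and then solve for $p_J$. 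With all $p_J$ ($J\neq I$) in the shuffle span, the shuffle ideal contains $\langle p_J:J\neq I\rangle$; conversely the shuffle equations are linear forms vanishing at $e_I$, and every Pl\"ucker quadric vanishes at $e_I$ too (as $e_I\in\mathrm{Gr}(\ell,n)$), so both lie in $\langle p_J:J\neq I\rangle$. Hence the shuffle equations together with the Pl\"ucker quadrics generate exactly $\langle p_J:J\neq I\rangle$, the homogeneous prime ideal of the point $\mathrm{Gr}(\ell,n)^T=\{e_I\}$, which is what the statement asserts.

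The hard part will be the bookkeeping in Step~three. A single naive order---pure colex, or pure Gale order---does not work, since raising an entry of $J$ generally produces correction terms incomparable to $J$; the two-tier order above succeeds only when the lift $I'$ is chosen so that no correction term acquires a \emph{longer} terminal run than $g(J)$. Proving that such a choice is always available---except exactly for the finitely many $J$ for which $p_J$ is already one of the explicit equations $f_{I,k}$---is the one genuinely delicate point, and once the monomial form of the minors of $\mathrm{Id}_n+zT$ is secured, the rest is routine.
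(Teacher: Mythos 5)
Your Steps 1 and 2 are correct and match the paper: for $\lambda=(n)$ the submodule lattice of $K[t]/(t^n)$ gives $\mathrm{Gr}(\ell,n)^T=\{e_I\}$, and the minors of the bidiagonal matrix $\mathrm{Id}_n+zT$ do have the monomial form you state, so the shuffle equations are exactly the "lowering sums" $f_{I',k}$. The gap is in Step 3, and it is not merely unfinished bookkeeping: the specific scheme you propose cannot work. You restrict the inductive step to the coefficients of $z^1$, i.e.\ to the equations $f_{I',1}$. Take $n=8$, $\ell=4$ and the three coordinates $p_{1678},p_{2578},p_{3478}$ (none of which lies in your two "free" families). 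The only $f_{I',1}$ involving any of them are $f_{\{2,6,7,8\},1}=p_{1678}+p_{2578}$ and $f_{\{3,5,7,8\},1}=p_{2578}+p_{3478}+p_{3568}$: a set $J$ occurs in $f_{I',1}$ only for $I'$ obtained from $J$ by raising one entry, and for these three sets almost every raise collides with an adjacent entry. So modulo coordinates already handled, you have two linear relations among three unknowns, and $p_{2578}$ is not in the span of the $f_{I',1}$ together with the earlier families. No ordering of the induction can repair this; you must use the higher coefficients $f_{I',k}$ with $k\geq 2$, which your plan never invokes. (Concretely, what rescues $p_{2578}$ is $f_{\{3,6,7,8\},2}=p_{2578}+p_{3568}$.)

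The paper's proof makes exactly this move and thereby dissolves the delicate point you flag at the end. Write $J=(J_0,\,n-g+1,\dots,n)$ where $g$ is the length of the terminal run and $n-g\notin J_0$, and consider the index $I'=J+(1,\dots,1,0,\dots,0)$ obtained by raising \emph{all} $\ell-g$ entries below the run; this is a valid index precisely because $\max J_0\leq n-g-1$. In the coefficient of $z^{\ell-g}$ at $I'$, the lowering that undoes all $\ell-g$ raises contributes $p_J$, and any other admissible lowering of $\ell-g$ entries must lower a nonempty prefix of the terminal run, producing a set whose terminal run is strictly shorter than $g$. Hence $f_{I',\ell-g}\equiv p_J$ modulo coordinates with smaller $g$, and a single induction on $g$ (starting from your first family, $g=0$) finishes the proof --- no colex tiebreaker, no choice of "well-chosen entry", and no exceptional sets beyond $I$ itself. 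I recommend you replace your Step 3 with this argument.
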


\begin{proof}
Consider any ordered $\ell$-set $I$ in $[n]$.
If $n \not\in I$ then $p_I$ equals the coefficient of $z^\ell$ in the
       coordinate of the row vector $ P \cdot \wedge_\ell(I+zT)$
that is indexed by $I+(1,1,\ldots,1,1)$.
       If $I = (J,n) $ but $ n-1 \not\in J$ then, modulo the above Pl\"ucker
       coordinates, $p_I $ equals the coefficient of $z^{\ell-1}$ of the
       coordinate in $ P \cdot \wedge_\ell(I+zT)$ that is indexed by $ I+(1,1,\ldots,1,0)$.
              If $I = (J,n-1,n)$ but $n-2 \not\in J$ then, modulo the above
       Pl\"ucker coordinates, $p_I$ equals the coefficient of $z^{\ell-2}$ of 
       the coordinate that is indexed by $ I+(1,...,1,0,0)$ etc...
                     Iterating this process yields all Pl\"ucker coordinates other
       than the last one, $I = (n-\ell+1, n-\ell+2, ...,n-1,n)$.
   For $n = 6$ and $\ell = 3$, our argument can be checked by
      looking at the      $20$ expressions in Example \ref{ex:(6)}.     
\end{proof}

	\bigskip \medskip
	
	{\bf Acknowledgements}.
	This project was supported by the Deutsche Forschungsgemeinschaft 
	(DFG, German Research Foundation) -- Project-ID 286237555 --TRR 195.
	We thank Yassine El Maazouz 
	and Mike Stillman for helpful communications about this project.

	\medskip

	\bigskip \medskip
	
	\noindent
	\footnotesize 
	{\bf Authors' addresses:}
	
	\smallskip
		
	\noindent Marvin Anas Hahn,   
	Trinity College Dublin
	\hfill {\tt Hahnma@maths.tcd.ie}
		
	\noindent Gabriele Nebe, RWTH Aachen
	\hfill {\tt gabriele.nebe@rwth-aachen.de}
	
	\noindent Mima Stanojkovski,  
	Universit\`a di Trento
	\hfill {\tt mima.stanojkovski@unitn.it}
		
	\noindent Bernd Sturmfels,
	MPI-MiS Leipzig and UC Berkeley 
	\hfill {\tt bernd@mis.mpg.de}

 \end{document}